\newcommand{\dens}{\mbox{\rm dens}}
\newcommand{\RR}{{\mathbb R}}
\newcommand{\PP}{{\mathbb P}}
\newcommand{\ZZ}{{\mathbb Z}}
\newcommand{\TT}{\mathbb T}
\newcommand{\NN}{\mathbb N}
\newcommand{\vol}{\mbox{vol}}
\newcommand{\cA}{{\mathcal A}}
\newcommand{\cL}{{\mathcal L}}
\newcommand{\cB}{{\mathcal B}}
\newcommand{\oplam}{\mbox{\Large $\curlywedge$}}
\newcommand{\ud}{\overline{\mbox{dens}}}
\newcommand{\dd}{\mbox{d}}
\newcommand{\Cc}{C_{\mathsf{c}}}
 \newtheorem{theorem}{Theorem}[section]
 \newtheorem{lemma}[theorem]{Lemma}
 \newtheorem{proposition}[theorem]{Proposition}
 \newtheorem{corollary}[theorem]{Corollary}
 \newtheorem{fact}[theorem]{Fact}
 \newtheorem{conj}[theorem]{Conjecture}
 \newtheorem{definition}[theorem]{Definition}
 \newtheorem{example}[theorem]{Example}
  \newtheorem{remark}[theorem]{Remark}
\begin{document}
\title{On arithmetic progressions in model sets}
\author{Anna Klick}
\address{Department of Mathematical Sciences, MacEwan University \newline
\hspace*{\parindent} 10700 -- 104 Avenue, Edmonton, AB, T5J 4S2, Canada}
\email{klicka@mymacewan.ca}

\author{Nicolae Strungaru}
\address{Department of Mathematical Sciences, MacEwan University \newline
\hspace*{\parindent} 10700 -- 104 Avenue, Edmonton, AB, T5J 4S2, Canada\\
and \\
Institute of Mathematics ``Simon Stoilow''\newline
\hspace*{\parindent}Bucharest, Romania}
\email{strungarun@macewan.ca}
\urladdr{http://academic.macewan.ca/strungarun/}

\author{Adi Tcaciuc}
\address{Department of Mathematical Sciences, MacEwan University \newline
\hspace*{\parindent} 10700 -- 104 Avenue, Edmonton, AB, T5J 4S2, Canada}
\email{tcaciuca@macewan.ca}

\begin{abstract} In this project we show the existence of arbitrary length arithmetic progressions in model sets and Meyer sets in the Euclidean $d$-space. We prove a van der Waerden type theorem for Meyer sets. We show that pure point subsets of Meyer sets with positive density and pure point diffraction contain arithmetic progressions of arbitrary length.
\end{abstract}

\maketitle

\section{Introduction}

The discovery of quasicrystals in 1980's \cite{She} has triggered increased interest in structures with long range aperiodic order, usually shown via a clear diffraction pattern.
The best mathematical models for quasicrystals are model sets. Introduced by Meyer in 1970's \cite{Meyer}, model sets and their subsets, called now Meyer sets, have been popularized in the area of aperiodic order by Lagarias \cite{LAG,LAG1} and Moody \cite{Moody,MOO}. They are constructed via so-called \emph{cut-and project-schemes (CPS)}, which involve cutting a piece of a higher dimensional lattice, bounded around the real space, and projecting the points into the real space (see Def.~\ref{Cps} below for a precise definition).

Model sets with regular windows show long-range order via a clear pure point diffraction spectrum \cite{Hof,Martin2,BM,CR,LR,TAO,CRS}, which can even be traced to the underlying lattice \cite{CRS}. Meyer sets show long range order, in the form of a large pure point diffraction spectrum \cite{NS1,NS2,NS5,NS11,NS12}, which is highly ordered \cite{NS2,NS11}.

The goal of this project is to show the existence of arbitrary long arithmetic progressions in model sets and Meyer sets, results which are of the same natures as classical theorems for subsets of $\ZZ$. This is evidence of high coherence inside Meyer sets, which is a leftover of the lattice in the underlying CPS.

\smallskip

The existence of arithmetic progressions of arbitrary length in subsets of $\ZZ$ is well studied.
In 1927, van der Waerden proved \cite{vdW} the following theorem:

\setcounter{theorem}{7}
\setcounter{section}{2}
\begin{theorem}[van der Waerden's Theorem] Given any natural numbers $k,r$, there exists a number $W(r, k)$, such that for any colouring of  $\mathbb Z$ with $r$ colors, and for any $N \geq  W(r,k)$, the set $\{1, 2, 3, \dots ,N\}$ contains a monochromatic arithmetic progression of length $k$.
\end{theorem}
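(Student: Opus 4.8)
The plan is to prove the equivalent finite statement: for every $r,k$ there is a least integer $W(r,k)$ such that every $r$-colouring of $\{1,\dots,W(r,k)\}$ admits a monochromatic arithmetic progression of length $k$. The version for $\ZZ$ and for every $N\ge W(r,k)$ then follows immediately by restricting the colouring to $\{1,\dots,N\}$ and using that the property is monotone in $N$. I would argue by a double induction: an outer induction on the length $k$, and, inside the passage from $k$ to $k+1$, an inner induction that manufactures larger and larger families of what I will call \emph{colour-focused} progressions.

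Call arithmetic progressions $A_1,\dots,A_d$ of length $k$, with $A_i=\{a_i,a_i+b_i,\dots,a_i+(k-1)b_i\}$, \emph{focused} at a point $f$ if $a_i+kb_i=f$ for every $i$, and \emph{colour-focused} if in addition each $A_i$ is monochromatic and the $d$ progressions carry pairwise distinct colours. The engine of the proof is the observation that $r$ colour-focused progressions finish the job at once: their common focus $f$ receives one of the $r$ colours, which must coincide with the colour of some $A_i$, and then $A_i\cup\{f\}$ is a monochromatic progression of length $k+1$. Thus the step from $k$ to $k+1$ reduces to locating, in a sufficiently long interval, either a monochromatic $(k+1)$-progression or a full family of $r$ colour-focused $k$-progressions whose focus still lies in the interval.

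First I would dispose of the base case ($k=2$ is immediate from the pigeonhole principle, giving $W(r,2)=r+1$) and record the outer inductive hypothesis that $W(r,k)$ exists for \emph{every} number of colours $r$. For the inner induction I would show, by induction on $d$, that there is an integer $N(d)$ so that every $r$-colouring of an interval of length $N(d)$ contains a monochromatic $(k+1)$-progression or $d$ colour-focused $k$-progressions with focus in the interval; the case $d=1$ is exactly the outer hypothesis for length $k$ (a monochromatic $k$-progression together with its focus). To pass from $d$ to $d+1$ I would partition a long interval into consecutive blocks of length $N(d)$ and apply a \emph{colour-compression} trick: regard the entire colour pattern of a block as a single colour from the enlarged palette of size $r^{N(d)}$. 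Applying the outer hypothesis for length $k$ to this compressed colouring produces a monochromatic progression of blocks, i.e.\ several equally spaced blocks carrying identical patterns. Within these identical blocks the $d$ colour-focused progressions occur in identical relative positions; combining each of them with the block-to-block step yields $d$ new monochromatic $k$-progressions sharing a common focus, while the progression running through the old focus points supplies either a further colour-focused progression (raising the count to $d+1$) or, if its colour already occurs among the others, a monochromatic $(k+1)$-progression outright.

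The main obstacle I expect is precisely this combination step: one must choose the block length and the number of blocks so that the assembled progressions genuinely have length $k$, so that the progression through the old foci carries a colour distinct from the existing $d$ (this is where the ``fresh'' focus point is used, and where the alternative gives the $(k+1)$-progression directly), and --- most delicately --- so that the common focus of all $d+1$ progressions still falls inside the coloured interval, since the argument collapses if the focus escapes. Bookkeeping the arithmetic of the block offsets and the focus, and iterating the inner induction up to $d=r$, is where the quantitative content lives (and produces the enormous resulting bound on $W(r,k)$); the conceptual content is carried entirely by the colour-focusing idea together with the compression of a block's pattern into a single colour.
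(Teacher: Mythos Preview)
Your outline is the standard colour-focusing proof of van der Waerden's theorem and, as a sketch, it is correct: the double induction on $k$ and on the number $d$ of colour-focused progressions, together with the block-compression trick that treats an entire pattern as a single colour from a palette of size $r^{N(d)}$, is exactly how the classical combinatorial argument runs. The points you flag as delicate (keeping the common focus inside the interval, and handling the dichotomy at the focus) are the right ones, and they can all be settled by straightforward bookkeeping once the block lengths are chosen generously enough.

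However, the paper does not prove this statement at all. Van der Waerden's theorem is quoted as a classical result with a reference to \cite{vdW} and is used purely as a black box in the proofs of Theorems~\ref{TV1}, \ref{TV2} and \ref{TV5}. So there is nothing to compare your proposal against: you have supplied a genuine (and correct) proof where the paper simply invokes the literature. If your goal is to match the paper, you should just cite the result; if your goal is to make the paper self-contained, your sketch would serve, though you would then need to fill in the arithmetic of the block offsets and the focus to turn it into a complete proof.
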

Intuitively, this theorem says that if we split the integers into $r$ disjoint sets, at least one of the sets will have arithmetic progressions of arbitrary length. Moreover, there exists a bound $W(r,k)$ on how far one needs to go to find such an arithmetic sequence, which depends on $k$ and $r$ but it is independent of the splitting.

In 1975, Szemer\'edi extended the result, proving the following well known conjecture of Erd\"os and Tur\'an \cite{ET}:
\begin{theorem}[Szemer\'edi's Theorem]\cite{sze} Let $\Lambda \subset \NN$ be a subset with the property that
\begin{displaymath}
\ud(\Lambda):= \limsup_n \frac{\sharp \{ 1 , 2 , 3, ..., n \} \cap \Lambda}{n} >0 \,.
\end{displaymath}

 Then $\Lambda$ contains arithmetic progressions of arbitrary length.
\end{theorem}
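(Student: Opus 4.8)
The plan is to prove Szemer\'edi's Theorem by transferring it to a statement in ergodic theory, following Furstenberg. First I would set up the \emph{Furstenberg correspondence principle}: given $\Lambda \subset \NN$ with $\ud(\Lambda) > 0$ and a target progression length $k$, I construct a probability measure-preserving system $(X, \cB, \mu, T)$ together with a measurable set $A$ satisfying $\mu(A) = \ud(\Lambda) > 0$, such that for all $n_1, \dots, n_{k-1} \in \NN$,
\[
\ud\bigl(\Lambda \cap (\Lambda - n_1) \cap \dots \cap (\Lambda - n_{k-1})\bigr) \;\geq\; \mu\bigl(A \cap T^{-n_1}A \cap \dots \cap T^{-n_{k-1}}A\bigr).
\]
The system is built by taking $X = \{0,1\}^{\ZZ}$ with the shift $T$, viewing the indicator $1_\Lambda$ as a point $\omega \in X$, letting $A = \{x \in X : x_0 = 1\}$, and passing to a weak-$*$ limit of the empirical measures $\frac{1}{N_i}\sum_{n=1}^{N_i} \delta_{T^n \omega}$ along a subsequence $(N_i)$ that realizes the upper density. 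With this in hand, the theorem reduces to showing that the right-hand side is positive for the arithmetic choice $n_j = jn$, that is, to the \emph{Furstenberg multiple recurrence theorem}: in any such system, and for any $A$ with $\mu(A) > 0$, there is an $n \geq 1$ with $\mu(A \cap T^{-n}A \cap \dots \cap T^{-(k-1)n}A) > 0$.

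The heart of the argument is the proof of multiple recurrence, which I would carry out by induction on $k$ using the structure theory of measure-preserving systems. The cases $k = 1, 2$ are trivial and Poincar\'e recurrence, respectively. For the inductive step I would invoke the \emph{Furstenberg--Zimmer structure theorem}, which decomposes an arbitrary system into a (possibly transfinite) tower of factors in which each successive extension is either \emph{compact} (isometric) or \emph{weakly mixing}, the whole system being an inverse limit of the tower. The strategy is to establish the multiple recurrence property for the extreme cases --- for weakly mixing systems the relevant correlations asymptotically factorize, so positivity follows from a van der Corput / Hilbert-space argument, while for compact (Kronecker) systems almost periodicity forces the set of return times to be syndetic --- and then to prove that this property is inherited under compact extensions, under weakly mixing extensions, and under inverse limits. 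Assembling these stability statements along the tower yields multiple recurrence for every system.

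The main obstacle, and the deepest part of the proof, is the stability of multiple recurrence under \emph{compact extensions}. For three-term progressions ($k = 3$) one needs only the Kronecker factor, and the analysis reduces to rotations on a compact abelian group where Fourier analysis suffices; this case is essentially Roth's theorem. For $k \geq 4$, however, group rotations are no longer the characteristic objects governing the correlations, and one must lift recurrence through genuinely non-abelian compact extensions --- the analytic core of Furstenberg's work, later clarified through the nilsystem theory of Host--Kra and Ziegler. I expect the bookkeeping of the transfinite induction, together with the uniform positivity estimates needed in the compact-extension step, to be where essentially all of the difficulty concentrates. As alternative routes I would note Szemer\'edi's original combinatorial proof via his \emph{regularity lemma}, and Gowers's Fourier-analytic proof, which in addition yields effective quantitative bounds; either could replace the ergodic argument, but all three approaches share the feature that the passage from $k = 3$ to general $k$ is where the essential difficulty lies.
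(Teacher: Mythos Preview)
Your outline of Furstenberg's ergodic-theoretic proof is broadly correct as a high-level sketch, and you have identified the right pressure points (the correspondence principle, the structure theorem, and the compact-extension step). However, the paper does \emph{not} prove this statement at all: Szemer\'edi's Theorem is quoted as a classical background result, with a citation to \cite{sze}, and no argument is given. So there is nothing to compare your approach against in the paper itself.

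If one wished to compare to the literature, your route is Furstenberg's 1977 proof, whereas the cited reference is Szemer\'edi's original 1975 combinatorial argument; the paper relies only on the \emph{statement}, and in fact the results of the paper (the van der Waerden-type theorems for model and Meyer sets, and Theorem~\ref{P6}) do not invoke Szemer\'edi's Theorem anywhere in their proofs.
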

\setcounter{theorem}{1}
\setcounter{section}{1}
It is easy to see that Szemer\'edi's theorem implies van der Waerden's theorem; indeed any finite partition of $\NN$ contains a set with positive density. It is also easy to construct subsets of $\NN$ of zero density which don't contain arithmetic progressions of large length. For example, the set $\Lambda=\{2^n: n \in \NN \}$ cannot contain arithmetic sequences of length $3$. Nevertheless, under suitable extra conditions, one can still hope to find arbitrary long arithmetic progressions in sets of zero upper density. In 2004, Green and Tao proved the following landmark result, solving the long-standing conjecture that the set of primes contains arbitrarily long arithmetic progressions.

\textbf{Theorem} (Green--Tao Theorem)\cite{GT}{\em Let $\Lambda \subset \PP$ be a subset of the primes with the property that
\begin{displaymath}
 \limsup_n \frac{\sharp \{ 1 , 2 , 3, ..., n \} \cap \Lambda}{\pi(n)} >0 \,.
\end{displaymath}
where $\pi(N)$ is the prime counting function. Then $\Lambda$ contains arithmetic progressions of arbitrary length. }

A stronger version of Szemer\'edi's Theorem, which would imply the Green--Tao theorem, is the following famous conjecture of Erd\"os:

\begin{conj} Let $\Lambda \subset \NN$ be a set such that
\begin{displaymath}
\sum_{n \in \Lambda} \frac{1}{n} = \infty \,.
\end{displaymath}
Then $\Lambda$ contains arithmetic progressions of arbitrary length.
\end{conj}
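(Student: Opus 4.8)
The plan begins by recognising that this is precisely Erd\"os's celebrated conjecture on arithmetic progressions, which remains \emph{open} in full generality and is at present known only for three-term progressions, where it was established by Bloom and Sisask. For a fixed length $k$ the natural strategy is to reduce the statement to a sufficiently strong \emph{quantitative} form of Szemer\'edi's Theorem, and then to attack that form with the density-increment machinery of additive combinatorics. Write $r_k(N)$ for the largest cardinality of a subset of $\{1,2,\dots,N\}$ containing no $k$-term arithmetic progression; Szemer\'edi's Theorem gives $r_k(N)=o(N)$, but the conjecture demands far sharper decay.

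First I would carry out the reduction, which is elementary. Suppose towards a contradiction that $\Lambda\subseteq\NN$ contains no $k$-term arithmetic progression. Split $\NN$ into dyadic blocks $B_j=[2^j,2^{j+1})\cap\NN$ and set $\Lambda_j=\Lambda\cap B_j$. Each $\Lambda_j$ is a subset of $\{1,\dots,2^{j+1}-1\}$ with no $k$-term progression, so $\sharp\Lambda_j\le r_k(2^{j+1})$, and therefore
\[
 \sum_{n\in\Lambda}\frac{1}{n}=\sum_{j\ge 0}\,\sum_{n\in\Lambda_j}\frac{1}{n}\le\sum_{j\ge 0}\frac{\sharp\Lambda_j}{2^{j}}\le 2\sum_{j\ge 0}\frac{r_k(2^{j+1})}{2^{j+1}}\,.
\]
Hence the conjecture for length $k$ follows once the sequence $r_k(N)/N$ is summable along the powers of two, and in particular whenever $r_k(N)=O(N/(\log N)^{1+\varepsilon})$ for some $\varepsilon>0$: at $N=2^{j}$ the summand is then $O(j^{-1-\varepsilon})$, which converges. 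Taking the contrapositive produces a $k$-term progression in every $\Lambda$ with divergent reciprocal sum, and letting $k\to\infty$ gives progressions of arbitrary length.

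The remaining step---proving the logarithmic-power saving $r_k(N)=O(N/(\log N)^{1+\varepsilon})$ for each $k$---is the genuinely hard one, and it is the main obstacle. For $k=3$ this is exactly the range reached by Bloom--Sisask, refining Roth, Meshulam, Bourgain and Sanders, via a Fourier-analytic density increment: a progression-free set of density $\delta$ is shown to correlate with a Bohr set (equivalently a long subprogression) on which its density is appreciably larger, and one iterates. For $k\ge 4$ ordinary Fourier analysis no longer detects progressions, so one must pass to higher-order Fourier analysis and the inverse theorems for the Gowers $U^{k-1}$-norms. \textbf{This is the crux:} despite the dramatic recent improvements to quantitative Szemer\'edi bounds, the losses inherent in the inverse theorems for higher Gowers norms have not, for every $k$, been pushed past the $N/(\log N)^{1+\varepsilon}$ threshold that the reduction above requires. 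It is precisely this analytic gap---rather than the combinatorial reduction---that keeps the statement a conjecture: the reduction I would expect to go through routinely, whereas the input it needs lies beyond current technology.
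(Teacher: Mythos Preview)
Your assessment is correct, and there is nothing to compare it against: in the paper this statement is presented as a \emph{conjecture} (Erd\H{o}s's conjecture on arithmetic progressions), not as a theorem, and the paper offers no proof whatsoever. It is cited purely as motivational background alongside van der Waerden, Szemer\'edi, and Green--Tao before the paper turns to its actual subject, arithmetic progressions in model sets and Meyer sets.

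Your write-up accurately identifies the statement as open, gives the standard dyadic reduction to the summability of $r_k(N)/N$, and correctly locates the obstruction in the current quantitative bounds for Szemer\'edi's theorem when $k\ge 4$. That exposition is sound. The only thing to flag is that it is not a ``proof proposal'' in any meaningful sense: you have (rightly) explained why no proof exists, rather than supplied one, and the paper is in complete agreement on that point.
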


\smallskip

In all the results above, the existence of the arithmetic progressions can be traced to the high order present in $\ZZ$ and $\NN$. The goal of our project is to extend these results to highly ordered aperiodic structures, by studying the existence of arithmetic progressions of arbitrary length in model sets and Meyer sets. This is a natural generalization, as model sets are usually considered the natural candidate for "aperiodic" lattices, and Meyer sets are the natural candidate for "aperiodic" lattice subsets.

Given a set $\Lambda \subseteq \RR^d$, by an arithmetic progression of length $k$ we understand a sequence $a_1,a_2,...,a_k \in \Lambda$ with the property that there exists some $r \neq 0$ so that for all $1 \leq j \leq k$ we have $a_{j} =a_{1}+(j-1)r$. This is equivalent to
\begin{displaymath}
a_2-a_1=a_3-a_2=...=a_k-a_{k-1}\neq 0 \,.
\end{displaymath}

Given a model set $\oplam(W)$ (see Definition~\ref{def:model set} below), we first prove the following van der Waerden type result:

\setcounter{theorem}{3}
\setcounter{section}{4}
\begin{theorem}[van der Waerden's theorem for model sets] Let $\Lambda \subset \RR^d$ be a model set.  Then, for any positive integers $r$ and $k$, there is some $R>0$ such that for any colouring of  $\Lambda$ with $r$ different colors, and for any $x\in \RR^d$,  the set $\Lambda \cap B_R(x)$ contains a monochromatic arithmetic progression of length $k$.
\end{theorem}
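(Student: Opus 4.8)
The plan is to reduce the statement to the classical van der Waerden theorem stated above by producing, inside every sufficiently large ball and with a common difference that does not depend on the ball, a genuine arithmetic progression in $\Lambda$ of length $N:=W(r,k)$, and then applying van der Waerden to the $r$-colouring this progression inherits. Write the underlying CPS as $(\RR^d, H, \widetilde L)$ with coordinate projections $\pi_1,\pi_2$, lattice $\widetilde L$, $L:=\pi_1(\widetilde L)$, and star map $(\cdot)^{\star}\colon L\to H$, so that (after translating, which is harmless since a translate of an arithmetic progression is again one and a translate of a ball is a ball) we may take $\Lambda=\oplam(W)=\{x\in L: x^{\star}\in W\}$, where $W$ is relatively compact with $\mathrm{int}(W)\neq\emptyset$. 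The point of this reformulation is that an arithmetic progression $a,a+s,\dots,a+(N-1)s$ lies in $\Lambda$ exactly when $s\in L$, $a\in L$, and the \emph{internal} progression $a^{\star},a^{\star}+s^{\star},\dots,a^{\star}+(N-1)s^{\star}$ stays inside $W$.

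First I would fix a nonempty open set $U$ with compact closure $\overline{U}\subseteq\mathrm{int}(W)$. Using that $L^{\star}=\pi_2(\widetilde L)$ is dense in $H$, I would select $s\in L$ with $s\neq 0$ (note $s\neq0$ is equivalent to the underlying lattice point being nonzero, since $\pi_1$ is injective on $\widetilde L$) and $s^{\star}$ so close to $0$ in $H$ that $a^{\star}+j s^{\star}\in W$ for every $a^{\star}\in U$ and every $0\le j\le N-1$; this is possible by compactness of $\overline{U}$ together with continuity of addition in $H$. Consequently, for each $a$ in the set of admissible starting points
$\Lambda_U:=\{x\in L: x^{\star}\in U\}$,
the whole progression $a,a+s,\dots,a+(N-1)s$ lies in $\Lambda$, because $L$ is a group and $(a+js)^{\star}=a^{\star}+js^{\star}\in W$. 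The set $\Lambda_U$ is itself a model set whose window $U$ has nonempty interior, so by the standard relative density property of such model sets there is a radius $R_0>0$ for which every ball $B_{R_0}(x)$ contains a point of $\Lambda_U$.

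Setting $R:=R_0+(N-1)\|s\|$, I would then argue as follows for an arbitrary $x\in\RR^d$: choose $a\in\Lambda_U\cap B_{R_0}(x)$; by the previous paragraph and the triangle inequality the set $P:=\{a+js: 0\le j\le N-1\}$ satisfies $P\subseteq \Lambda\cap B_R(x)$, and $|P|=N$ since $s\neq0$. Any $r$-colouring of $\Lambda$ restricts, via the bijection $j\mapsto a+js$, to an $r$-colouring of $\{0,1,\dots,N-1\}$, so since $N\ge W(r,k)$ van der Waerden's theorem yields monochromatic indices $j_0,j_0+m,\dots,j_0+(k-1)m$ with $m\ge1$; the corresponding points form a monochromatic arithmetic progression of length $k$ in $\Lambda\cap B_R(x)$ with common difference $ms\neq0$, as required. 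Two standard features of model sets carry the proof, and they are its crux: the existence of a nonzero $s\in L$ with arbitrarily small internal part $s^{\star}$ (precisely the density of $L^{\star}$ in the internal space, which lets a long real-space progression map into $W$), and the relative density of $\Lambda_U$ with a \emph{uniform} radius $R_0$. I expect this uniformity — guaranteeing that admissible base points occur in every ball of one fixed radius, so that $R$ is genuinely independent of $x$ — to be the main point needing care, as it is exactly where the residual lattice order of the CPS is being exploited.
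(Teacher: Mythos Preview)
Your proof is correct and takes essentially the same approach as the paper: the paper first establishes (Lemma~\ref{L2}) that every ball of some fixed radius contains a nontrivial arithmetic progression of length $N=W(r,k)$, by choosing open sets $V$ and $U\ni 0$ with $V+NU\subseteq W$, taking the starting point in the relatively dense set $\oplam(V)$ and the common difference in $\oplam(U)\setminus\{0\}$, and then derives the theorem by pulling the colouring back to $\{1,\dots,N\}$ and invoking the classical van der Waerden theorem---exactly your argument with $U$ playing the role of $V$ and a single fixed $s$ playing the role of the difference. The only place to tighten is the existence of a \emph{nonzero} $s$ with $s^\star$ in the required neighbourhood of $0$: density of $L^\star$ alone does not preclude $s=0$, but since $\oplam(V)$ is relatively dense for any open $V\ni 0$ it is infinite, so a nonzero such $s$ certainly exists (this is the paper's Lemma~\ref{U rel dense}).
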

\setcounter{theorem}{1}
\setcounter{section}{1}

We then  extend this result to Meyer sets (see Definition~\ref{def Meyer} below).
\setcounter{theorem}{1}
\setcounter{section}{5}
\begin{theorem}[van der Waerden's theorem for Meyer sets] Let $\Lambda\subset \RR^d$  be a Meyer set. Then, for any positive integers $r$ and $k$, there is some $R>0$ such that for any colouring of  $\Lambda$ with $r$ different colors, and for any $x\in \RR^d$,  the set $\Lambda \cap B_R(x)$ contains a monochromatic arithmetic progression of length $k$.
\end{theorem}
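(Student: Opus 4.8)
The plan is to deduce the Meyer case from the model set case by means of the structural description of Meyer sets. Recall that every Meyer set $\Lambda$ is a relatively dense subset of some model set; fix such a model set $M := \oplam(W)$ with $\Lambda \subseteq M$. Since $M$ is a genuine model set, the van der Waerden theorem for model sets applies to $M$, and the entire task becomes to transfer a monochromatic arithmetic progression found in $M$ back into the subset $\Lambda$. The mechanism for this transfer is a finite covering of $M$ by translates of $\Lambda$.

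First I would establish that there is a finite set $F \subset \RR^d$ with $M \subseteq \Lambda + F$. Indeed, since $\Lambda$ is relatively dense there is $R_0>0$ such that $\Lambda \cap \overline{B_{R_0}(x)} \neq \emptyset$ for every $x$; in particular every $m \in M$ admits $\lambda \in \Lambda$ with $|m - \lambda| \le R_0$. As $\Lambda \subseteq M$, the vector $m - \lambda$ lies in $(M-M) \cap \overline{B_{R_0}(0)}$. Because $M$ is a Meyer set, $M - M$ is uniformly discrete, hence this intersection is a finite set $F := (M-M) \cap \overline{B_{R_0}(0)}$, and $m = \lambda + (m-\lambda) \in \Lambda + F$. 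Thus finitely many translates of $\Lambda$ cover $M$, with all translation vectors of norm at most $R_0$.

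The key step is then a product colouring. Given an $r$-colouring $c$ of $\Lambda$, for each $m \in M$ choose $f(m) \in F$ with $m - f(m) \in \Lambda$ and set $\chi(m) := (c(m - f(m)), f(m))$; this is a colouring of $M$ with $r \cdot |F|$ colours. Applying the van der Waerden theorem for model sets to $M$ with $r|F|$ colours and length $k$ produces a radius $R$ such that every $B_R(x)$ contains a $\chi$-monochromatic arithmetic progression $a, a+d, \dots, a+(k-1)d$ in $M$. Constancy of $\chi$ forces a single $f \in F$ and a single colour $i$ for all terms, so that $(a - f), (a-f)+d, \dots, (a-f)+(k-1)d$ all lie in $\Lambda$ and all carry colour $i$: this is a monochromatic progression of length $k$ inside $\Lambda$, with the same nonzero common difference $d$. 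Since $|f| \le R_0$, this progression lies in $B_{R + R_0}(x)$, so taking $R' := R + R_0$ supplies the uniform radius required.

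I expect the genuine obstacle to be the transfer step rather than the counting. A monochromatic progression produced inside $M$ by the model set theorem will in general meet $M \setminus \Lambda$, so a direct extension of the colouring of $\Lambda$ to $M$—for instance by assigning one extra colour to the complement—fails, as the progression may land entirely in that complement. The device that circumvents this is precisely the finite cover $M \subseteq \Lambda + F$ together with the product colouring, which is engineered so that monochromaticity in $M$ records both the original colour and a single translate $f$, thereby guaranteeing that the whole progression, shifted by $-f$, returns into $\Lambda$. The remaining points—finiteness of $F$, preservation of the common difference, and the bounded enlargement of the radius—are then routine.
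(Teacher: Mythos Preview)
Your proof is correct and rests on the same mechanism as the paper: the finite covering $M \subseteq \Lambda + F$ together with an auxiliary colouring of the ambient model set by the translate label, followed by an application of the van der Waerden theorem for model sets and a shift by $-f$ back into $\Lambda$. The paper organises this in two stages---first establishing (Theorem~\ref{T1}) that every ball of suitable radius contains a long arithmetic progression in $\Lambda$ by colouring $M$ with only $|F|$ colours, and then feeding that progression into the classical integer van der Waerden theorem---whereas you collapse both stages into a single application of the model-set theorem via the product colouring with $r\,|F|$ colours. Your route is a bit more streamlined; the paper's route has the side benefit of isolating Theorem~\ref{T1} as a result of independent interest. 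As a minor bonus you give a self-contained argument for $M \subseteq \Lambda + F$, which the paper quotes from \cite[Lemma~5.5.1]{NS11}.
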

\setcounter{theorem}{1}
\setcounter{section}{1}

We should emphasize here that, while Theorem~\ref{TV5} implies Theorem~\ref{TV2}, the proof of Theorem~\ref{TV5} uses Theorem~\ref{TV2}, so proving Theorem~\ref{TV2} is necessary for our approach.

This generalizes some partial results in this direction obtained in \cite{Nag,LlW}.

We complete the paper by showing that pure point diffractive subsets of Meyer sets of positive density contain arbitrary long arithmetic progressions. In particular, weak model sets of maximal density have this property.

\section{Preliminaries}

Recall first that a set $\Lambda \subseteq \RR^d$ is called \textbf{relatively dense} if there exists some $R>0$ such that
\begin{displaymath}
\Lambda+B_R(0) = \RR^d \,.
\end{displaymath}
In this case, if we want to emphasize the constant $R$, we will say that $\Lambda$ is $R$-relatively dense.

Here, for two sets $A,B \subset G$ we denote by $A \pm B$ the Minkowski sum and/or difference
\begin{displaymath}
A \pm B := \{ a \pm b : a \in A, b\in B \} \,.
\end{displaymath}
Equivalently, $\Lambda$ is $R$-relatively dense if for each $x \in \RR^d$ there exists some $y \in \Lambda$ with $d(x,y) \leq R$.

A set $\Lambda \subseteq \RR^d$ is called \textbf{uniformly discrete} if there exists some $r>0$ such that, for all $x, y \in \Lambda$ with $x \neq y$ we have $d(x,y) \geq r$.

\smallskip

A set $\Lambda \subset \RR^d$ is called \textbf{locally finite} if, for each $R>0$ the set $\Lambda \cap B_R(0)$ is finite. This is equivalent to $\Lambda$ being closed and discrete in $\RR^d$.

\smallskip

\begin{definition} A finite sequence $a_1,a_2,...,a_n$ in $\RR^d$ is called an \textbf{arithmetic progression} if
\begin{displaymath}
a_2-a_1=a_3-a_2=...=a_n-a_{n-1} \,.
\end{displaymath}
\end{definition}

\begin{remark} $a_1,a_2,...,a_n$ is an arithmetic progression in $\RR^d$ if and only if, there exists some $s,t$ such that, for all $1 \leq k \leq n$ we have
\begin{displaymath}
a_k= s+(k-1)t \,.
\end{displaymath}
\end{remark}

\smallskip

Next, we review the notions of Cut and Project scheme, model set and Meyer set. For a general overview of these topics we recommend the monographs \cite{TAO,TAO2}, as well as \cite{Meyer,MOO,Moody,LAG1,Martin2,CR,LR,NS1,NS2,NS5,NS11,NS12}.

\begin{definition}\label{Cps} By a \textbf{Cut and Project scheme}, or simply (CPS), we understand a triple $(\RR^d, H, \cL)$ consisting of $\RR^d$, a locally compact Abelian group (LCAG)  $H$, together with a lattice (i.e. a discrete co-compact subgroup) $\cL \subset \RR^d \times H$, with the following two properties:
\begin{itemize}
    \item{} The restriction $\pi^{\RR^d}|_{\cL}$ of the canonical projection $\pi^{\RR^d}: \RR^d \times H \to \RR^d$ to $\cL$ is a one-to-one function.
    \item{} The image $\pi^H(\cL)$ of the $\cL$ under the canonical projection $\pi^H: \RR^d \times H \to H$ is dense in $H$.
\end{itemize}
\end{definition}

Given a CPS $(\RR^d, H, \cL)$ we usually denote by
\begin{displaymath}
L:= \pi^{\RR^d}( \cL) \,.
\end{displaymath}
$L$ is a subgroup of $\RR^d$, which is typically dense in $\RR^d$.

The first condition in the definition of a CPS implies that we can define a mapping $\star : L \to H$, called the \textbf{$\star$-mapping} as
\begin{displaymath}
\star= \pi^H \circ ( \pi^{\RR^d}|_{\cL})^{-1} \,.
\end{displaymath}
Then, we can reparametrize $\cL$ as
\begin{displaymath}
\cL= \{ (x,x^\star) : x \in L \} \,.
\end{displaymath}
The range of the $\star$-mapping is
\begin{displaymath}
L^\star:= \pi^H(\cL) \,.
\end{displaymath}
 We can summarize a CPS in the following picture.

\smallskip
\begin{center}
\begin{tikzcd}[remember picture]
\RR^d & \arrow[swap]{l}{\pi^{\RR^d}} \RR^d \times H \arrow{r}{\pi^H}& H \\
\pi^{\RR^d}(\cL)&  \arrow[right]{l}{1-1} \cL  \arrow[left]{r}{dense} & H \\
L \arrow[swap]{rr}{\star} &  & L^\star \\
\end{tikzcd}
\end{center}

\begin{tikzpicture}[overlay,remember picture]
\path (\tikzcdmatrixname-1-1) to node[midway,sloped]{$\supset$}
(\tikzcdmatrixname-2-1);
\path (\tikzcdmatrixname-1-2) to node[midway,sloped]{$\supset$}
(\tikzcdmatrixname-2-2);
\path (\tikzcdmatrixname-1-3) to node[midway,sloped]{$=$}
(\tikzcdmatrixname-2-3);
\path (\tikzcdmatrixname-2-1) to node[midway,sloped]{$=$}
(\tikzcdmatrixname-3-1);
\path (\tikzcdmatrixname-2-3) to node[midway,sloped]{$\supset$}
(\tikzcdmatrixname-3-3);
\end{tikzpicture}
\medskip

\begin{definition}\label{def:model set}
Given a CPS $(\RR^d,H,\cL)$ and some subset $W \subset H$, we denote by $\oplam(W)$ its pre-image under the $\star$-mapping, that is
\begin{displaymath}
\oplam(W):=\{ x \in L : x^\star \in W \} = \{ x \in \RR^d : \exists y \in W \, \mbox{ such that } (x,y) \in \cL \} \,.
\end{displaymath}

When $W$ is compact, the set $\oplam(W)$ is called a \textbf{weak model set}.

If $W$ has non-empty interior and compact closure, the set $\oplam(W)$ is called a \textbf{model set}.

An example of a CPS and a model set is included in Fig.~\ref{ms pic}.
\end{definition}

\medskip

Of importance to us is the following result
\begin{lemma}\cite{MOO,Moody}\label{model sets are rel dense} Let $(\RR^d, H, \cL)$ be a CPS and $W \subset H$. If $W$ has non-empty interior, then $\oplam(W)$ is relatively dense.
\end{lemma}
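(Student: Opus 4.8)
The plan is to combine the co-compactness of $\cL$ with the density of $L^\star = \pi^H(\cL)$ in $H$. Since $\cL$ is a lattice in $\RR^d \times H$, the quotient $(\RR^d\times H)/\cL$ is compact, so there is a compact set $K \subseteq \RR^d\times H$ with $\cL + K = \RR^d\times H$. Replacing $K$ by $\pi^{\RR^d}(K)\times \pi^H(K)$ if necessary (this larger set still satisfies $\cL + K = \RR^d \times H$ and is compact), I may assume $K = K_1\times K_2$ with $K_1\subseteq \RR^d$ and $K_2\subseteq H$ both compact.

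The crucial step is to observe that the translates of the interior $W^\circ$ by the points of $L^\star$ cover all of $H$. Indeed, for any $h\in H$ the set $h - W^\circ$ is open and nonempty, so by density of $L^\star$ there is $t\in L$ with $t^\star\in h - W^\circ$, that is, $h\in t^\star + W^\circ$. Thus $\{\, t^\star + W^\circ : t\in L \,\}$ is an open cover of $H$; in particular it covers the compact set $-K_2$, and I can extract a finite subcover indexed by a finite set $F\subseteq L$, so that $-K_2 \subseteq \bigcup_{t\in F}(t^\star + W^\circ)$.

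To conclude, fix an arbitrary $x\in\RR^d$ and decompose $(x,0) = \lambda + k$ with $\lambda = (s,s^\star)\in\cL$ and $k=(k_1,k_2)\in K_1\times K_2$. Then $s = x - k_1$ and $s^\star = -k_2 \in -K_2$, so there is $t\in F$ with $s^\star - t^\star \in W^\circ \subseteq W$. Since $\star$ is a group homomorphism on $L$, this reads $(s-t)^\star \in W$, hence $s-t\in\oplam(W)$; moreover $|x-(s-t)| = |k_1 + t| \le |k_1| + |t|$, which is bounded by $R := \sup_{k_1\in K_1}|k_1| + \max_{t\in F}|t| < \infty$, a constant independent of $x$. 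This exhibits a point of $\oplam(W)$ within distance $R$ of every $x\in\RR^d$, which is exactly $R$-relative density.

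The main obstacle is the second paragraph: the content of the lemma is that density of $L^\star$ upgrades the single open window $W^\circ$ into an \emph{open cover} of $H$, after which compactness of the $H$-component of the fundamental domain collapses this into a finite correction set $F$ whose (finitely many, hence uniformly bounded) elements supply the single radius $R$. The remaining steps are routine bookkeeping with the lattice decomposition and the homomorphism property of $\star$.
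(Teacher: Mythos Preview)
Your proof is correct. Note, however, that the paper does not actually prove this lemma: it is stated with a citation to \cite{MOO,Moody} and no proof is given. Your argument is essentially the standard one found in those references---use co-compactness of $\cL$ to obtain a compact fundamental domain of product form $K_1\times K_2$, use density of $L^\star$ to cover the compact piece $-K_2$ by finitely many translates of $W^\circ$, and then read off a uniform radius from the finite correction set $F$ together with the bounded set $K_1$. All steps are sound, including the reduction to a product-type fundamental set and the use of the homomorphism property of $\star$.
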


\medskip
Next, we briefly review the concept of Meyer sets. For a more general review we recommend the paper \cite{MOO} (or \cite{NS11} for arbitrary LCAG) .

\begin{definition}\label{def Meyer} A subset $\Lambda \subset \RR^d$ is called a \textbf{Meyer set} if $\Lambda$ is relatively dense, and $\Lambda-\Lambda$ is uniformly discrete.
\end{definition}

Of importance to us is the following characterization of Meyer sets.

\begin{theorem}\label{Meyer characterisation}\cite{MOO,NS11}
Let $\Lambda \subset \RR^d$ be relatively dense. Then, the following are equivalent.
\begin{itemize}
    \item [(i)] $\Lambda$ is a Meyer set.
    \item[(ii)] There exists a model set $\oplam(W)$ such that $\Lambda \subseteq \oplam(W)$.
    \item[(iii)] $\Lambda$ is locally finite and there exists a finite set $F$ such that
    \begin{displaymath}
    \Lambda - \Lambda \subset \Lambda+F \,.
    \end{displaymath}
\end{itemize}
\end{theorem}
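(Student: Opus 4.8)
The plan is to prove the three conditions equivalent by the cycle $(i)\Rightarrow(ii)\Rightarrow(iii)\Rightarrow(i)$. First I would record one elementary lemma that carries most of the easy arrows: for any relatively compact $K\subseteq H$ the weak model set $\oplam(K)$ is uniformly discrete, and in particular $\oplam(K)\cap B_R(0)$ is finite for every $R$. Indeed $\overline{B_R(0)}\times\overline K$ is compact in $\RR^d\times H$ and therefore meets the discrete set $\cL$ in a finite set, whose image under $\pi^{\RR^d}$ is $\oplam(\overline K)\cap\overline{B_R(0)}$; moreover, since $\pi^{\RR^d}|_{\cL}$ is injective, the origin is the only point of $\cL$ lying over $0$, so $\oplam(\overline K-\overline K)\cap B_\varepsilon(0)=\{0\}$ for small $\varepsilon$. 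Together with the inclusion $\oplam(A)-\oplam(B)\subseteq\oplam(A-B)$ — immediate because $\star$ is a homomorphism on the group $L$ — this shows that distinct points of $\oplam(\overline K)$ are uniformly separated.

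For $(ii)\Rightarrow(iii)$ I would argue as follows. If $\Lambda\subseteq\oplam(W)$ with $\overline W$ compact, then $\Lambda\subseteq\oplam(\overline W)$ is uniformly discrete by the lemma, hence locally finite. For the finite set $F$, use that $\Lambda$ is $R$-relatively dense: for each $z\in\Lambda-\Lambda$ pick $\lambda\in\Lambda$ with $\|z-\lambda\|\le R$ and set $f:=z-\lambda$. Then $f\in\overline{B_R(0)}$ and, applying the inclusion above twice, $f\in(\Lambda-\Lambda)-\Lambda\subseteq\oplam(\overline W-\overline W-\overline W)$. The window $\overline W-\overline W-\overline W$ is compact, so by the lemma this weak model set meets $\overline{B_R(0)}$ in a finite set $F$, and $z=\lambda+f\in\Lambda+F$. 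The implication $(iii)\Rightarrow(i)$ is purely combinatorial: if $\Lambda-\Lambda\subseteq\Lambda+F$ with $F$ finite and $\Lambda$ locally finite, then $\Lambda+F$ is locally finite, and iterating gives $(\Lambda-\Lambda)-(\Lambda-\Lambda)\subseteq\Lambda+(F+F-F)$, again locally finite. Hence $0$ is isolated in this fourfold difference set, which is exactly the statement that $\Lambda-\Lambda$ is uniformly discrete; since $\Lambda$ is relatively dense by hypothesis, it is a Meyer set.

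The substantive arrow is $(i)\Rightarrow(ii)$, the construction of a model set containing a given Meyer set $\Lambda$. After translating so that $0\in\Lambda$, I would pass to the group $L:=\langle\Lambda\rangle\subseteq\RR^d$ and aim to realise $L$ as the physical projection of a lattice in a CPS. The technical heart is to show that $L$ is finitely generated, hence free abelian of some finite rank $n\ge d$; equivalently, that all the iterated difference sets of $\Lambda$ remain uniformly discrete. Granting this, choose a $\ZZ$-basis of $L\cong\ZZ^n$ to obtain a surjective linear map $P:\RR^n\to\RR^d$ with $P(\ZZ^n)=L$ dense in $\RR^d$; the internal space $H$ is then built as the quotient of $\RR^n$ that makes the diagonal embedding $x\mapsto(x,x^\star)$ realise $\ZZ^n$ as a lattice $\cL\subseteq\RR^d\times H$ with dense projection to $H$ (so that $H$ has the form $\RR^{\,n-d}\times\TT^{k}$ up to compact factors). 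The payoff of the construction is that the Meyer property forces $\Lambda^\star$ to be bounded in $H$; setting $W:=\overline{\Lambda^\star}$ gives a compact window with $\Lambda\subseteq\oplam(W)$, and thickening $W$ by a small closed ball in $H$ produces a window with non-empty interior and compact closure, so that $\oplam(W)$ becomes a genuine model set still containing $\Lambda$.

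The main obstacle is precisely the finite-generation step in $(i)\Rightarrow(ii)$: deducing from the single hypothesis that $\Lambda-\Lambda$ is uniformly discrete that the generated group $L$ has finite rank, equivalently that every finite iterated difference set $\Lambda-\Lambda-\cdots-\Lambda$ stays uniformly discrete and that $\Lambda^\star$ is relatively compact. This is the genuine content of Meyer's theorem, and it is the only place where more than elementary point-set bookkeeping is required; once it is in hand, assembling the CPS and the window is routine.
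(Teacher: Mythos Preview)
The paper does not give a proof of this theorem at all; it is stated with the citation \cite{MOO,NS11} and used as a black box (specifically, implication (ii) in the proof of Theorem~\ref{T1} and implication (iii) via \cite[Lemma~5.5.1]{NS11}). There is therefore no ``paper's own proof'' to compare against.

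That said, your sketch follows the standard route taken in the cited references. The easy implications $(ii)\Rightarrow(iii)$ and $(iii)\Rightarrow(i)$ are argued correctly: the reduction of $\Lambda-\Lambda-\Lambda$ to a finite set inside a ball via a compact triple-difference window is exactly how Moody does it, and your observation that $(\Lambda-\Lambda)-(\Lambda-\Lambda)\subseteq\Lambda+(F+F-F)$ is locally finite, hence has $0$ isolated, is the clean way to get uniform discreteness of $\Lambda-\Lambda$. For $(i)\Rightarrow(ii)$ you rightly isolate the genuine content --- finite generation of $\langle\Lambda\rangle$ and relative compactness of $\Lambda^\star$ in the internal space --- as Meyer's theorem, and you outline the CPS construction once that is granted. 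Two small points of looseness: the phrase ``equivalently, that all the iterated difference sets of $\Lambda$ remain uniformly discrete'' overstates things, since finite generation of $L$ and uniform discreteness of iterated differences are both consequences of the Meyer property rather than restatements of each other; and $L=\langle\Lambda\rangle$ need not be dense in $\RR^d$ (take $\Lambda=\ZZ^d$), so the construction of $H$ must also accommodate the degenerate case where the internal space has a compact or trivial factor. Neither affects the overall correctness of your outline.
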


\medskip
\subsection{Arithmetic progressions in sets of natural numbers}
In this subsection we  review the van der Vaerden and Szemer\'edi Theorems.

\begin{theorem}[van der Waerden Theorem]\cite{vdW}\label{van der W} Given any natural numbers $k,r$, there exists a number $W(r, k)$, such that, no matter how we color the integers $\mathbb Z$ with $r$ colors, for each $N \geq  W(n,k)$, in the set $\{1, 2, 3, \dots ,N\}$ we can find an arithmetic progression of length $k$ whose elements are all of the same color.
\end{theorem}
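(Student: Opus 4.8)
The plan is to prove the statement, which is the classical theorem of van der Waerden, by a double induction --- an outer induction on the progression length $k$ and, for each $k$, an inner induction on a counter bounded by the number of colours --- organised around the notion of \emph{colour-focused progressions} (fans). Since any colouring of $\ZZ$ restricts to a colouring of every interval $\{1,\dots,N\}$, and since the property is invariant under translation, it is enough to produce for each pair $(r,k)$ a bound $N$ such that every $r$-colouring of $\{1,\dots,N\}$ contains a monochromatic $k$-term arithmetic progression; we then set $W(r,k)=N$, and the ``$N\ge W(r,k)$'' form follows because a monochromatic progression in $\{1,\dots,W(r,k)\}$ persists in every larger initial segment. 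Call $k$-term progressions $P_1,\dots,P_s$, written $P_i=(a_i,a_i+d_i,\dots,a_i+(k-1)d_i)$, \textbf{colour-focused at} $f$ if each $P_i$ is monochromatic, the colours of the $P_i$ are pairwise distinct, and $a_i+k\,d_i=f$ for all $i$, so that $f$ is the common point immediately past every $P_i$.

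The base case $k=1$ is trivial (and $k=2$ is the pigeonhole principle, giving $W(r,2)=r+1$). For the outer step I would assume that $W(r',k)$ exists for \emph{every} number of colours $r'$, and prove that $W(r,k+1)$ exists. The core is the following inner claim, proved by induction on $s$ for $1\le s\le r$: there is a bound $N_s$ so that every $r$-colouring of $\{1,\dots,N_s\}$ contains \emph{either} a monochromatic $(k+1)$-term progression \emph{or} a family of $s$ colour-focused $k$-term progressions whose common focus lies in the interval. The case $s=1$ follows at once from the outer hypothesis $W(r,k)$, since a single monochromatic $k$-progression together with its next point is a one-element colour-focused family, and enlarging the interval by a bounded factor keeps that focus in range.

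The heart of the argument is the passage from $s$ to $s+1$. Set $n=N_s$ and partition the line into consecutive blocks of length $n$; colouring each block by its internal colour pattern uses at most $r^{n}$ ``super-colours'', so applying the outer hypothesis with $r^{n}$ colours shows that within $W(r^{n},k)$ consecutive blocks one finds $k$ equally spaced, identically coloured blocks $B,\,B+\ell,\dots,B+(k-1)\ell$. Applying the inner hypothesis to the colouring of $B$ supplies (unless we are already done) $s$ colour-focused $k$-progressions $P_1,\dots,P_s$ in $B$ with focus $f\in B$ and distinct colours $c_1,\dots,c_s$. I would then build $s+1$ colour-focused progressions sharing the new focus $F=f+k\ell$: for $i\le s$ take $Q_i$ with first term $a_i$ and difference $d_i+\ell$, whose $j$-th term $(a_i+j d_i)+j\ell$ lies in $B+j\ell$ at the same internal position as the $c_i$-coloured point $a_i+jd_i$ of $B$, hence is again coloured $c_i$; and take $Q_{s+1}=(f,f+\ell,\dots,f+(k-1)\ell)$, which is monochromatic in the colour $c_0$ of $f$ for the same reason. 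If $c_0$ coincides with some $c_i$ then $P_i$ followed by $f$ is already a monochromatic $(k+1)$-progression; otherwise $Q_1,\dots,Q_{s+1}$ are $s+1$ colour-focused progressions, completing the inner step once $N_{s+1}$ is chosen large enough that the last block and the focus $F$ fit in the interval.

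Finally, applying the inner claim with $s=r$ forces the alternative: either a monochromatic $(k+1)$-progression appears, or there are $r$ colour-focused $k$-progressions with $r$ distinct colours and common focus $f$; since $f$ itself must receive one of the $r$ colours, it matches one progression, which extended by $f$ yields a monochromatic $(k+1)$-progression. This gives $W(r,k+1)$ and closes the outer induction. I expect the main obstacle to be the combination step: finding the right ``diagonal'' progressions $Q_i$ and recognising that the device manufacturing the extra $(s+1)$-st progression is precisely the identically coloured block progression, whose existence must be extracted from the already-proved length-$k$ case applied to the enormous palette of $r^{n}$ block-patterns --- this self-referential use of the outer hypothesis inside the proof of the next length is the conceptually delicate point.
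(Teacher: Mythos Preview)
Your proof is correct: it is the standard colour-focused (``fan'') argument for van der Waerden's theorem, with the outer induction on the progression length and the inner induction on the number of focused progressions, using the outer hypothesis applied to $r^{n}$ block-colours to produce the extra focused progression. The steps are sound and the bound $W(r,k+1)$ is obtained as described.

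However, the paper does not prove this statement at all. Theorem~\ref{van der W} is stated with a citation to \cite{vdW} and no proof; the paper treats van der Waerden's theorem as a known input and uses it as a black box (in Theorems~\ref{TV1}, \ref{TV2}, and \ref{TV5}) to transfer colourings of a model or Meyer set to colourings of $\{1,\dots,N\}$. So there is nothing to compare against: you have supplied a full proof where the paper intentionally supplies none. If the goal was to match the paper's treatment, the expected ``proof'' is simply a reference to \cite{vdW}.
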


\begin{theorem}[Szemer\'edi Theorem]\cite{sze} Let $\Lambda \subset \NN$ be a subset with the property that
\begin{displaymath}
\ud(\Lambda):= \limsup_n \frac{\sharp \{ 1 , 2 , 3, ..., N \} \cap \Lambda}{n} >0 \,.
\end{displaymath}
Then $\Lambda$ contains arithmetic progressions of arbitrary length.
\end{theorem}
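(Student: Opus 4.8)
The plan is to recognise that the stated result is Szemer\'edi's theorem, a deep external theorem which this paper will invoke rather than reprove; nonetheless, the conceptually cleanest route to a proof is Furstenberg's ergodic-theoretic argument, which I would organise as follows. First I would set up the \textbf{Furstenberg correspondence principle}. Given $\Lambda \subseteq \NN$ with $\ud(\Lambda) > 0$, realise $\Lambda$ as a point $x \in \{0,1\}^{\ZZ}$ via its indicator sequence, let $T$ be the left shift, and set $A = \{ \omega : \omega(0) = 1\}$. Choosing a sequence $N_i \to \infty$ realising the $\limsup$ and a weak-$*$ limit $\mu$ of the orbit averages $\frac{1}{N_i} \sum_{n=1}^{N_i} \delta_{T^n x}$, one obtains a shift-invariant Borel probability measure with $\mu(A) = \ud(\Lambda) > 0$ such that, for any integers $0 = m_0 < m_1 < \cdots < m_{k-1}$,
\begin{displaymath}
\ud\bigl( \Lambda \cap (\Lambda - m_1) \cap \cdots \cap (\Lambda - m_{k-1}) \bigr) \ \geq\ \mu\bigl( A \cap T^{-m_1} A \cap \cdots \cap T^{-m_{k-1}} A \bigr) \,.
\end{displaymath}

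Second, I would invoke (or prove) \textbf{Furstenberg's multiple recurrence theorem}: for every invertible measure-preserving system $(X, \mathcal{B}, \mu, T)$, every $A$ with $\mu(A) > 0$, and every $k \geq 1$, there exists $n \geq 1$ with
\begin{displaymath}
\mu\bigl( A \cap T^{-n} A \cap T^{-2n} A \cap \cdots \cap T^{-(k-1)n} A \bigr) \ >\ 0 \,.
\end{displaymath}
Applying this to the system produced by the correspondence principle with $m_j = jn$, the displayed inequality forces the set $\Lambda \cap (\Lambda - n) \cap \cdots \cap (\Lambda - (k-1)n)$ to have positive upper density, hence to be nonempty. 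Any $a$ lying in this intersection satisfies $a, a+n, a+2n, \dots, a+(k-1)n \in \Lambda$, which is exactly an arithmetic progression of length $k$ (with common difference $n \neq 0$); since $k$ was arbitrary, $\Lambda$ contains progressions of every length.

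The main obstacle is the multiple recurrence theorem itself, which carries essentially all of the mathematical content. I would establish it through the \textbf{Furstenberg--Zimmer structure theory}, decomposing the system into a transfinite tower of factors in which each successive extension is either compact --- where recurrence follows from an almost-periodicity argument, the fibrewise returns being controlled by van der Waerden's theorem (Theorem~\ref{van der W}) applied along the orbit --- or relatively weakly mixing, where a relative van der Corput estimate shows that the multiple correlations asymptotically factor through the previous level and so inherit its positivity. Recurrence is then lifted up the tower by transfinite induction, using that the inverse limit recovers the whole system. All of this is standard and external, so in the paper I would simply cite \cite{sze} and treat the statement as a black box; its role here is purely as the density input that later upgrades the van der Waerden--type colouring theorems for model and Meyer sets into density statements for positive-density pure point diffractive subsets.
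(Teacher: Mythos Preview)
Your assessment is essentially correct: the paper does not prove Szemer\'edi's theorem at all --- it is simply stated with a citation to \cite{sze} in the preliminaries and never given a proof. Your Furstenberg-style sketch is a mathematically sound outline of one standard route, but it goes well beyond what the paper does, which is nothing more than to quote the result as background.

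One small correction to your closing remark about the theorem's role: Szemer\'edi's theorem is \emph{not} actually used anywhere in the paper as a technical input. The density-type result in Section~\ref{sect:pure} (Theorem~\ref{P6}) is obtained directly from the pure point diffraction characterisation in Proposition~\ref{P4}, via the relative density of the almost-period sets $P_\epsilon$, and does not invoke Szemer\'edi at all. The Szemer\'edi theorem appears only as historical and motivational context alongside van der Waerden and Green--Tao, and the paper in fact leaves the full Szemer\'edi-type statement for model sets as an open conjecture at the end.
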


\subsection{Diffraction}

We complete the section by reviewing briefly the notion of pure point diffraction, which we will use in Section~\ref{sect:pure}.

Note that in Section~\ref{sect:pure} we will only use the characterization \eqref{eq pp} in Prop.~\ref{P4} below, but we need to introduce the following concepts to be able to introduce Prop.~\ref{P4}. As we only need these concepts for some particular point sets, we restrict to this case and refer the reader to \cite{TAO} for the more general case.

\smallskip

Next, we say that a subset $\Lambda \subset \RR^d$ has \textbf{Finite Local Complexity} (or \textbf{FLC}) if the set $\Lambda- \Lambda$ is locally finite.

\begin{definition} Let $\Lambda \subset \RR^d$ be a set with FLC, and $A_n=[-n,n]^d$.  We say that the \textbf{autocorrelation} $\gamma$ of $\Lambda$ exists with respect to $\cA=\{ A_n \}$, if, for all $z \in \Lambda-\Lambda$, the following limit exists
\begin{displaymath}
\eta(z):= \lim_n \frac{1}{(2n)^d} \sharp \{ (x,y): x,y \in \Lambda \cap A_n, x-y=z \}
\end{displaymath}
In this case, we define
\begin{displaymath}
\gamma:= \sum_{z \in \Lambda - \Lambda} \eta(z) \delta_z \,.
\end{displaymath}
\end{definition}

Given $\Lambda \subset \RR^d$ a set with FLC, the autocorrelation $\gamma$ always exists with respect to some subsequence $\cB$ of $\cA$ \cite{TAO,BL,BM,Martin2}.

\smallskip

Let $\Lambda \subset \RR^d$ be a set with FLC and assume that its autocorrelation $\gamma$ exists with respect to some subsequence $\cB$ of $\cA$. Then, there exists a positive measure $\widehat{\gamma}$ on the $\widehat{\RR^d} \simeq \RR^d$  such that \cite{ARMA1,BF,MoSt}
\begin{equation}\label{diffraction}
\int_{\RR^d} \left| \check{f} \right|^2(t) \dd \widehat{\gamma}(t)= \int_{G} f*\tilde{f}(s) \dd \gamma(s)
\end{equation}
holds for all $f \in \Cc(\RR^d)$, that is continuous compactly supported functions.

Here, for $f,g \in \Cc(\RR^d)$ we use the standard notations
\begin{align*}
   \tilde{f}(x)&=\overline{f(-x)} \\
   \check{f}(t)&= \int_{\RR^d} e^{2 \pi i s \cdot t} f(s) \dd s \\
   f*g(x)&=\int_{\RR^d} f(x-t) g(t) \dd t
\end{align*}

\begin{definition} We call the measure $\widehat{\gamma}$ from \eqref{diffraction} the \textbf{diffraction of $\Lambda$} with respect to $\cB$.

We say that $\Lambda$ is \textbf{pure point diffractive with respect to $\cB$} if the measure $\widehat{\gamma}$ is a pure point measure.
\end{definition}

\smallskip

Next, we review the following metric.

For two uniformly discrete pointsets $\Lambda$ and $\Gamma$, and let $\cB$ be a fixed subsequence of $\cA= \{ A_n \}_n$ where  $A_n=[-n,n]^d$. Define
\begin{displaymath}
d_{\cB}(\Lambda, \Gamma):= \limsup_n \frac{1}{\vol(B_n)} \sharp (\Lambda \Delta \Gamma) \cap B_n=: \overline{\mbox{dens}_{\cB}(\Lambda \Delta \Gamma)} \,.
\end{displaymath}
The topology induced by this metric on the hull of a pointset is called the autocorrelation topology in \cite{BL,BLM}. In \cite{Gouere}, the author refers to this as the Paterson topology, in \cite{MS} is called the statistical coincidence topology, while in \cite{LSS} this is called the mean topology.
Whenever the sequence $\cB$ is clear from context, we will simply denote $d_{\cB}(\Lambda, \Gamma)$.

\smallskip
We will use the following properties of this metric.

\begin{lemma}\cite{MS,LSS}\label{L5} Let $\cB$ be a fixed subsetequence of $\cA$. For each $r>0$, $d_\cB$ defines a translation invariant semi-metric on $\mathcal{UD}_r(\RR^d)$, the set of $r$-uniformly discrete subsets of $\RR^d$.
\end{lemma}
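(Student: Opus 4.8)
The plan is to verify the semi-metric axioms (non-negativity, vanishing on the diagonal, symmetry, and the triangle inequality) together with translation invariance directly from the definition, the only nontrivial inputs being a set-theoretic inclusion for symmetric differences and a boundary estimate for the last property.

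First, non-negativity is clear since each quantity $\sharp\big((\Lambda \Delta \Gamma) \cap A_n\big)/\vol(A_n)$ is non-negative, and $d_\cB(\Lambda,\Lambda)=0$ because $\Lambda \Delta \Lambda = \emptyset$. Symmetry $d_\cB(\Lambda,\Gamma)=d_\cB(\Gamma,\Lambda)$ is immediate from $\Lambda \Delta \Gamma = \Gamma \Delta \Lambda$. Note that we only obtain a semi-metric and not a metric: two distinct $r$-uniformly discrete sets differing on a set of density zero will have $d_\cB=0$ without being equal.

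For the triangle inequality I would use the elementary inclusion
\[
\Lambda \Delta \Gamma \ \subseteq\ (\Lambda \Delta \Omega) \cup (\Omega \Delta \Gamma),
\]
valid for any three sets: if a point lies in exactly one of $\Lambda,\Gamma$, then comparing with its membership in $\Omega$ shows it lies in at least one of the two symmetric differences on the right. Intersecting with $A_n$ and counting gives
\[
\sharp\big((\Lambda\Delta\Gamma)\cap A_n\big)\ \le\ \sharp\big((\Lambda\Delta\Omega)\cap A_n\big)+\sharp\big((\Omega\Delta\Gamma)\cap A_n\big).
\]
Dividing by $\vol(A_n)$, taking the $\limsup$ along $\cB$, and using the subadditivity $\limsup(a_n+b_n)\le \limsup a_n + \limsup b_n$ yields $d_\cB(\Lambda,\Gamma)\le d_\cB(\Lambda,\Omega)+d_\cB(\Omega,\Gamma)$.

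Finally, for translation invariance I would exploit $(\Lambda+t)\Delta(\Gamma+t)=(\Lambda\Delta\Gamma)+t$, so that $\sharp\big(((\Lambda\Delta\Gamma)+t)\cap A_n\big)=\sharp\big((\Lambda\Delta\Gamma)\cap (A_n-t)\big)$, and then compare the shifted window $A_n-t$ with $A_n$. The discrepancy is controlled by the number of points of $\Lambda\Delta\Gamma$ lying in the symmetric difference $(A_n-t)\Delta A_n$, a boundary shell of $A_n=[-n,n]^d$ of thickness $\|t\|_\infty$ and hence of volume $O(n^{d-1})$. The main obstacle here is that $\Lambda\Delta\Gamma$ need not itself be $r$-uniformly discrete; however, since $\Lambda$ and $\Gamma$ are each $r$-uniformly discrete, the number of their points — and therefore of $\Lambda\Delta\Gamma\subseteq\Lambda\cup\Gamma$ — in any ball of fixed radius is bounded by a constant $C_{r,d}$, which gives the uniform density bound needed to conclude that the boundary contribution is $O(n^{d-1})$. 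Dividing by $\vol(A_n)\sim(2n)^d$, this contribution vanishes as $n\to\infty$, so the two densities share the same $\limsup$ and $d_\cB(\Lambda+t,\Gamma+t)=d_\cB(\Lambda,\Gamma)$.
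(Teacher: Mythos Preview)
Your verification is correct and complete. Note, however, that the paper does not actually supply a proof of this lemma: it is stated with the citation \cite{MS,LSS} and used as a black box, so there is nothing in the paper to compare your argument against. Your direct check of the semi-metric axioms via $\Lambda\Delta\Gamma\subseteq(\Lambda\Delta\Omega)\cup(\Omega\Delta\Gamma)$ and of translation invariance via the boundary-shell estimate (using that $\Lambda\Delta\Gamma\subseteq\Lambda\cup\Gamma$ inherits uniform local point bounds from the $r$-uniform discreteness of each piece) is the standard route and is exactly what one finds in the cited references.
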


Of importance to us is the following result:
\begin{proposition}\label{P4}\cite[Thm.~5]{BM} Let $\Lambda \subset \RR^d$ be so that $\Lambda- \Lambda$ is uniformly discrete, and let $\cB$ be any subsequence of $A_n=[-n,n]^d$ with respect to which the autocorrelation of $\Lambda$ exists. Then, $\Lambda$ is pure point diffractive with respect to $\cB$ if and only if, for each $\epsilon >0$ the set
\begin{equation}\label{eq pp}
P_\epsilon:= \{ t \in \RR^d : d_{\cB}(\Lambda, t+\Lambda) < \epsilon \}
\end{equation}is relatively dense.
\end{proposition}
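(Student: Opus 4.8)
The plan is to translate the metric condition into a statement about the autocorrelation coefficients $\eta$, and then invoke the standard dictionary between pure point diffraction and strong almost periodicity of the autocorrelation. The heart of the matter is a density identity that identifies $P_\epsilon$ with a set of almost-periods of $\gamma$.

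First I would establish the identity
\[
d_\cB(\Lambda, t+\Lambda) = 2\bigl(\eta(0) - \eta(t)\bigr)
\]
for every $t \in \RR^d$, with the convention $\eta(t) = 0$ when $t \notin \Lambda - \Lambda$. Since $\Lambda - \Lambda$ is uniformly discrete, $\Lambda$ has FLC and $\gamma = \sum_z \eta(z)\delta_z$ exists along $\cB$. For fixed $t$, counting points via inclusion–exclusion over the averaging regions of $\cB$ gives
\[
\sharp\bigl((\Lambda \Delta (t+\Lambda)) \cap A_n\bigr) = \sharp(\Lambda \cap A_n) + \sharp((t+\Lambda)\cap A_n) - 2\,\sharp\bigl(\Lambda \cap (t+\Lambda) \cap A_n\bigr),
\]
and because $\Lambda \cap (t+\Lambda)$ is exactly the set of $x \in \Lambda$ with $x - t \in \Lambda$, its density along $\cB$ equals $\eta(t)$ (using $\eta(t)=\eta(-t)$ for a real point set). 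Dividing by the volume and passing to the limit along $\cB$, where the $\limsup$ in $d_\cB$ becomes a genuine limit because the autocorrelation exists, yields the identity. Consequently
\[
P_\epsilon = \bigl\{ t \in \RR^d : \eta(0) - \eta(t) < \tfrac{\epsilon}{2}\bigr\},
\]
so $P_\epsilon$ is precisely the set of $\tfrac{\epsilon}{2}$-almost-periods of $\gamma$, i.e. the translations that nearly preserve the maximal value $\eta(0)$.

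Next I would appeal to the harmonic-analytic characterization of pure point diffraction: a Fourier-transformable positive-definite measure has pure point Fourier transform if and only if it is a strongly almost periodic measure. Since $\gamma$ is positive-definite as an autocorrelation and its transform is the diffraction $\widehat\gamma$ via \eqref{diffraction}, this says that $\widehat\gamma$ is pure point if and only if $\gamma$ is strongly almost periodic, meaning $\gamma * \phi$ is a Bohr almost periodic function for every $\phi \in \Cc(\RR^d)$. The remaining task is to match this almost periodicity with the relative density of $P_\epsilon$ for every $\epsilon > 0$. For the forward direction I would fix a concentrated approximate-identity bump $\phi$ and use that the $\delta$-almost-periods of the almost periodic function $\gamma * \phi$ are relatively dense; a translation that is such an almost-period forces $\eta(0) - \eta(t)$ to be small, placing $t$ in $P_\epsilon$. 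For the converse, relative density of each $P_\epsilon$ gives, by the translation-invariance of $d_\cB$ in Lemma~\ref{L5} and the $d_\cB$-continuous dependence of $\gamma * \phi$ on the point set, a relatively dense set of near-symmetries that transfers to almost-periods of each $\gamma * \phi$, hence to strong almost periodicity of $\gamma$.

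I expect the main obstacle to be the clean transfer in the last step between the combinatorial, set-level quantity $\eta(0) - \eta(t)$ and the analytic almost periodicity of the smoothed measures $\gamma * \phi$: one must control how an almost-period in the mean metric $d_\cB$ propagates to a uniform almost-period of $\gamma * \phi$, and conversely, uniformly in the smoothing parameter. Managing the interchange of the limit defining $\gamma$ along $\cB$ with the supremum defining the sup-norm of $\gamma * \phi$, while using uniform discreteness of $\Lambda - \Lambda$ to keep all density counts finite and mutually comparable, is where the care is needed.
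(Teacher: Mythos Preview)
The paper does not give its own proof of this proposition: it is quoted verbatim as \cite[Thm.~5]{BM} and immediately followed only by a pointer to more general versions in \cite{Gouere,LSS}. So there is nothing in the present paper to compare your argument against.

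That said, your outline is essentially the route taken in the cited references. The identity $d_{\cB}(\Lambda,t+\Lambda)=2(\eta(0)-\eta(t))$ is correct (with the boundary corrections you mention vanishing in the van Hove limit), and it reduces the statement to: $\widehat{\gamma}$ is pure point iff the $\epsilon$-almost-periods of the coefficient function $t\mapsto\eta(t)$ are relatively dense for every $\epsilon>0$. In \cite{BM} this is obtained exactly by the two-step dictionary you describe: pure point Fourier transform $\Leftrightarrow$ strong almost periodicity of $\gamma$ (the Gil de Lamadrid--Argabright result \cite{ARMA}), together with the observation that for a positive-definite measure supported on a uniformly discrete set the strong almost periodicity of $\gamma$ is equivalent to the relative density of $\{t:\eta(0)-\eta(t)<\epsilon\}$ for all $\epsilon$. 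Your identification of the ``main obstacle'' is accurate: the nontrivial content sits precisely in passing between uniform almost periodicity of the convolutions $\gamma*\phi$ and the pointwise condition on $\eta$, and this is where \cite{BM} (and \cite{Gouere}) spend their effort. Your sketch names the right ingredients but does not yet carry out that transfer; if you want a self-contained proof you will need to supply the estimate showing that a $\delta$-almost-period of $\gamma*\phi$ for a suitably concentrated $\phi$ forces $\eta(0)-\eta(t)$ small, and conversely.
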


For more general versions of Prop~\ref{P4} see \cite{Gouere,LSS}.

\section{Arithmetic progressions in the Fibonacci Model set}\label{sect:fib}

Before looking at the general case, let us first look at the Fibonacci model set. For a detailed overview of this see \cite[Chapter~7]{TAO}.

Consider the following CPS:

\begin{center}
\begin{tikzcd}
\RR & \arrow[swap]{l}{\pi_1} \RR\times \RR \arrow{r}{\pi_2}& \RR \\
L \arrow[hookrightarrow]{u} & \arrow[hookrightarrow]{u} \arrow[right]{l}{1-1} \cL:=\ZZ ( 1 , 1 ) \oplus \ZZ ( \tau , \tau' )  \arrow[left]{r}{\pi_2} & \arrow[hookrightarrow, swap]{u}{\text{dense}}L^\star \\
\ZZ[\tau] \arrow[equal]{u}  \arrow[swap]{rr}{\star} &  & \arrow[equal]{u}\ZZ[\tau] \\
\end{tikzcd}
\end{center}
\smallskip
Here, $\tau=\frac{1+\sqrt{5}}{2}$ and $\tau'=\frac{1-\sqrt{5}}{2}$ and the $\star$ mapping is simply the Galois conjugation
\[
(m+n \tau)^\star=m+n\tau'  \qquad \forall m,n \in \ZZ
\]

\smallskip
With this CPS, the Fibonacci model set is
\begin{displaymath}
\oplam([-1, \tau-1)):=\{ x \in L : x^\star \in [-1, \tau-1) \}\,,
\end{displaymath}
see Figure~\ref{ms pic} below.
\smallskip

With this particular CPS and window, the Fibonacci model set coincides with the left end points of the geometric realization of the substitution rule
\begin{align*}
a & \to ab \\
b&\to a  \,,\\
\end{align*}
see \cite[Chapter~7]{TAO}.

\medskip

\begin{figure}[ht]
  \centering
\begin{tikzpicture}[>=latex,scale=.5]
\clip (-5,-5) rectangle (11cm,11cm); 
\draw [fill=gray!20] (-8,-1) rectangle (11,.62);
    \foreach \x in {-13,-12,...,13}{
    \foreach \y in {-13,-12,...,13}{
    \node[draw,circle,inner sep=1pt,fill] at (\x+1.62*\y,\x-.62*\y) {}; 
    }
    }
\draw[->,thick](-5,0) --(11,0);
\draw[->,thick](0,-5) --(0,11);
\draw[line width=0.8mm,  blue](0,-1)--(0,.62);
\draw[thick](-1,-1) --(-1,0);
\draw[thick](-2.62,-.38) --(-2.62,0);
\draw[thick](-4.24,.24) --(-4.24,0);
\draw[thick](1.62,-.62) --(1.62,0);
\draw[thick](2.62,.38) --(2.62,0);
\draw[thick](4.24,-.24) --(4.24,0);
\draw[thick](5.86,-.86) --(5.86,0);
\draw[thick](6.86,.14) --(6.86,0);
\draw[thick](8.48,-.48) --(8.48,0);
\draw[thick](9.48,.52) --(9.48,0);
\node[draw,circle,inner sep=1.5pt,fill,red] at (0,0) {};
\node[draw,circle,inner sep=1.5pt,fill,red] at (-1,0) {};
\node[draw,circle,inner sep=1.5pt,fill,red] at (-2.62,0) {};
\node[draw,circle,inner sep=1.5pt,fill,red] at (-4.24,0) {};
\node[draw,circle,inner sep=1.5pt,fill,red] at (1.62,0) {};
\node[draw,circle,inner sep=1.5pt,fill,red] at (2.62,0) {};
\node[draw,circle,inner sep=1.5pt,fill,red] at (4.24,0) {};
\node[draw,circle,inner sep=1.5pt,fill,red] at (5.86,0) {};
\node[draw,circle,inner sep=1.5pt,fill,red] at (6.86,0) {};
\node[draw,circle,inner sep=1.5pt,fill,red] at (8.48,0) {};
\node[draw,circle,inner sep=1.5pt,fill,red] at (9.48,0) {};
\end{tikzpicture}
  \caption{Fibonacci Model set. The window is in blue and the model set in red.}
  \label{ms pic}
\end{figure}
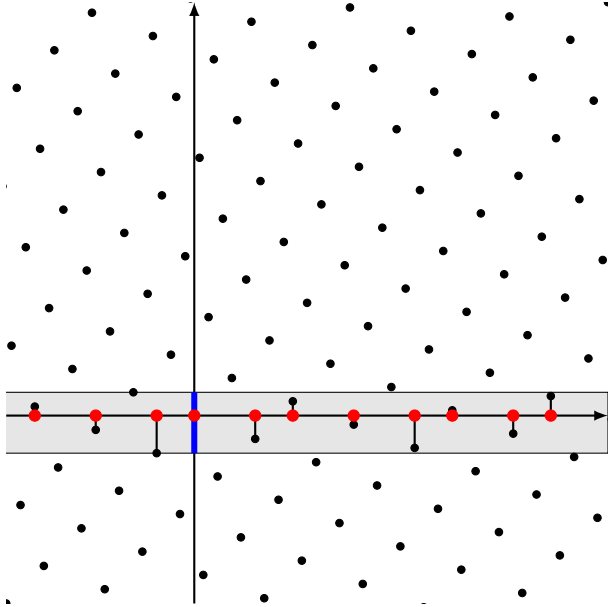

\medskip
Now, we can prove the following result.

\begin{lemma} Let $n \in \NN$, and let $\Lambda=\oplam([-1, \tau-1))$ be the Fibonacci model set. Then, there exist $s, t \in \RR$ such that
\begin{displaymath}
s, s+t, s+2t,..., s+nt \in \Lambda \,.
\end{displaymath}
\end{lemma}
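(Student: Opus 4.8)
The plan is to exploit the two structural features of this CPS: the $\star$-map is the Galois conjugation, so it is a \emph{ring homomorphism} on $\ZZ[\tau]$ satisfying $(a+jb)^\star = a^\star + j\,b^\star$, and the window $W=[-1,\tau-1)$ is an interval of length $\tau$. Since $\ZZ[\tau]=L$ is a ring, if I choose both $s$ and $t$ inside $\ZZ[\tau]$ then every term $s+jt$ automatically lies in $L$, and the only remaining requirement for $s+jt \in \Lambda$ is that its star $s^\star + j\,t^\star$ land in $W$. So the whole problem reduces to placing the $n+1$ numbers $s^\star, s^\star+t^\star, \dots, s^\star + n\,t^\star$ inside the fixed interval $[-1,\tau-1)$.

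The key observation is that I can make the common difference have arbitrarily small \emph{positive} star. Concretely, take $t=\tau^{2m}$; then $t^\star=(\tau')^{2m}=\big((\tau')^2\big)^m$, and because $|\tau'|=\frac{\sqrt5-1}{2}<1$ we have $0<t^\star<1$ with $t^\star \to 0$ as $m\to\infty$. Hence I may fix $m$ large enough that $n\,(\tau')^{2m}<\tau$. With this $t$ I then set $s=-1$, so $s\in\ZZ[\tau]$ and $s^\star=-1$.

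With these choices the star of the $j$-th term is $(s+jt)^\star = -1 + j\,(\tau')^{2m}$. As $j$ runs over $0,1,\dots,n$ these values increase from $-1$ to $-1+n\,(\tau')^{2m}$, so they all lie in $[-1,\,-1+n(\tau')^{2m}] \subseteq [-1,\tau-1)$: the left endpoint $-1$ is attained and is included in the half-open window, while the right-most value is $<\tau-1$ precisely by the choice $n\,(\tau')^{2m}<\tau$. Therefore $s+jt\in\Lambda$ for every $0\le j\le n$, and since $t=\tau^{2m}\neq 0$ this is a genuine arithmetic progression of length $n+1$.

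There is no serious obstacle here beyond spotting the right mechanism: the content is entirely in realizing that a single fixed-length window can host arbitrarily long progressions once one produces lattice elements of arbitrarily small internal size. The cleanest way to guarantee such elements is through the powers $\tau^{2m}$, but alternatively one could invoke the density of $L^\star=\ZZ[\tau]$ in $\RR$ (the defining density property of the CPS) to extract a $t$ with $0<t^\star<\tau/n$. The one point to watch is the half-open nature of $W$: placing $s^\star$ at the \emph{included} endpoint $-1$ and stepping inward with positive increments keeps all terms inside $W$, whereas starting at the excluded endpoint $\tau-1$ would fail.
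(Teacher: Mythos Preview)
Your proof is correct and follows essentially the same approach as the paper: both reduce the problem to finding $t\in\ZZ[\tau]$ with $t^\star$ small and positive so that the arithmetic progression of stars fits inside the window. The paper takes $s=0$ and picks any $t\in\oplam\bigl([0,\tfrac{\tau-1}{n})\bigr)$ (non-empty by density of $L^\star$), whereas you take $s=-1$ and give an explicit $t=\tau^{2m}$; your version is slightly more constructive, but the underlying mechanism is identical.
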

\begin{proof}
Pick some $t \in \oplam([0, \frac{\tau-1}{n}))$. Then, by construction $0,t^\star,2t^\star,..., nt^\star \in [-1, \tau-1)$ and hence
\begin{displaymath}
0, 0+t, 0+2t,..., 0+nt \in \Lambda \,.
\end{displaymath}
\end{proof}

\medskip

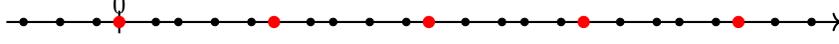
\begin{figure}[ht]
  \centering
\begin{tikzpicture}[scale=0.3]
\draw[->,thick](-5,0) --(32,0);
\draw[thick](0,-.5) --(0,.5);
\draw (0,0) node[anchor=south] {0};
\node[draw,circle,inner sep=1.5pt,fill,red] at (0,0) {};
\node[draw,circle,inner sep=1pt,fill] at (1*1.62,0) {};
\node[draw,circle,inner sep=1pt,fill] at (1*1.62+1,0) {};
\node[draw,circle,inner sep=1pt,fill] at (2*1.62+1,0) {};
\node[draw,circle,inner sep=1pt,fill] at (3*1.62+1,0) {};
\node[draw,circle,inner sep=1.5pt,fill,red] at (3*1.62+2,0) {};
\node[draw,circle,inner sep=1pt,fill] at (4*1.62+2,0) {};
\node[draw,circle,inner sep=1pt,fill] at (4*1.62+3,0) {};
\node[draw,circle,inner sep=1pt,fill] at (5*1.62+3,0) {};
\node[draw,circle,inner sep=1pt,fill] at (6*1.62+3,0) {};
\node[draw,circle,inner sep=1.5pt,fill,red] at (6*1.62+4,0) {};
\node[draw,circle,inner sep=1pt,fill] at (7*1.62+4,0) {};
\node[draw,circle,inner sep=1pt,fill] at (8*1.62+4,0) {};
\node[draw,circle,inner sep=1pt,fill] at (8*1.62+5,0) {};
\node[draw,circle,inner sep=1pt,fill] at (9*1.62+5,0) {};
\node[draw,circle,inner sep=1.5pt,fill,red] at (9*1.62+6,0) {};
\node[draw,circle,inner sep=1pt,fill] at (10*1.62+6,0) {};
\node[draw,circle,inner sep=1pt,fill] at (11*1.62+6,0) {};
\node[draw,circle,inner sep=1pt,fill] at (11*1.62+7,0) {};
\node[draw,circle,inner sep=1pt,fill] at (12*1.62+7,0) {};
\node[draw,circle,inner sep=1.5pt,fill,red] at (12*1.62+8,0) {};
\node[draw,circle,inner sep=1pt,fill] at (13*1.62+8,0) {};
\node[draw,circle,inner sep=1pt,fill] at (14*1.62+8,0) {};
\node[draw,circle,inner sep=1pt,fill] at (-0*1.62-1,0) {};
\node[draw,circle,inner sep=1pt,fill] at (-1*1.62-1,0) {};
\node[draw,circle,inner sep=1pt,fill] at (-2*1.62-1,0) {};
\end{tikzpicture}
  \caption{An arithmetic progression of length $n=5$ in the Fibonacci model set. The common ratio is $r=3\tau+2$, with conjugate $r' \simeq 0.1459$. Note that $-1 \leq 0, r', 2r', 3r', 4r' <\tau-1$.}
\end{figure}

\medskip

Next, we show that any element of the Fibonacci can be the first term of such a sequence, and for a fixed element in Fibonacci we list all the possible values of the common difference $t$.

\begin{lemma}\label{P1} Let $n \in \NN$, and let $\Lambda=\oplam([-1, \tau-1))$ be the Fibonacci model set. Let $s \in \Lambda$ be arbitrary. Then,
\begin{itemize}
    \item[(a)] There exists some $t \in \RR$ such that
\begin{displaymath}
s, s+t, s+2t,..., s+nt \in \Lambda \,.
\end{displaymath}
\item[(b)] For each $s \in \Lambda$ we have
\begin{displaymath}
\{ t : s, s+t, s+2t,..., s+nt \in \Lambda \} =\oplam( [\frac{-1-s^\star}{n} , \frac{\tau-1-s^\star}{n})) \,.
\end{displaymath}
In particular, this set is relatively dense.
\end{itemize}
\end{lemma}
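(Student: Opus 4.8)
The plan is to prove both parts at once by first establishing the explicit description in (b) and then deducing (a) as an immediate consequence. The starting point is to translate the condition $s, s+t, s+2t, \ldots, s+nt \in \Lambda$ through the cut-and-project machinery. Since $\Lambda = \oplam([-1,\tau-1)) = \{x \in L : x^\star \in [-1,\tau-1)\}$ and $s \in \Lambda \subseteq L = \ZZ[\tau]$, I would first observe that $s + jt \in L$ holds for all $j \in \{0,1,\ldots,n\}$ if and only if $t \in L$: indeed $L$ is a group, so $s+t \in L$ forces $t = (s+t)-s \in L$, and conversely $t \in L$ gives $s + jt \in L$ for every $j$. Thus the progression lies in $L$ precisely when $t \in L$, and in that case $(s+jt)^\star = s^\star + j\,t^\star$ because the $\star$-map (Galois conjugation) is additive.

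Next I would reduce the window condition on all $n+1$ terms to a condition on the single endpoint $j = n$. The key observation is that $[-1,\tau-1)$ is convex and that, for $j \in \{0,\ldots,n\}$, each value $s^\star + j\,t^\star$ is the convex combination $(1 - \tfrac{j}{n})\,s^\star + \tfrac{j}{n}\,(s^\star + n\,t^\star)$ of the two extreme values $s^\star$ (at $j=0$) and $s^\star + n\,t^\star$ (at $j=n$). Consequently, $s^\star + j\,t^\star \in [-1,\tau-1)$ for every $j$ if and only if both endpoints $s^\star$ and $s^\star + n\,t^\star$ lie in $[-1,\tau-1)$; the half-open nature of the interval causes no trouble, since a convex combination of two points that are each $\geq -1$ and each $< \tau-1$ is again $\geq -1$ and $< \tau-1$. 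As $s \in \Lambda$ already forces $s^\star \in [-1,\tau-1)$, the whole constraint collapses to $-1 \leq s^\star + n\,t^\star < \tau-1$, which rearranges to $t^\star \in [\tfrac{-1-s^\star}{n}, \tfrac{\tau-1-s^\star}{n})$.

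Combining the two conditions, the set of admissible common differences is exactly $\{t \in L : t^\star \in [\tfrac{-1-s^\star}{n}, \tfrac{\tau-1-s^\star}{n})\}$, which is by definition $\oplam([\tfrac{-1-s^\star}{n}, \tfrac{\tau-1-s^\star}{n}))$; this gives (b). For the relative density claim and for (a), I note that this window is an interval of length $\tfrac{(\tau-1-s^\star)-(-1-s^\star)}{n} = \tfrac{\tau}{n} > 0$, so it has non-empty interior. Lemma~\ref{model sets are rel dense} then shows that $\oplam$ of this window is relatively dense, hence in particular non-empty, which produces a valid $t$ and establishes (a).

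I expect the only real subtlety to be the careful handling of the excluded endpoint $\tau-1$ in the convexity reduction: one must confirm that omitting $\tau-1$ from the window corresponds exactly to the strict inequality $s^\star + n\,t^\star < \tau-1$, and that no intermediate term can accidentally land on the excluded endpoint. The convexity argument dispatches this cleanly, but it is the one place where a naive two-endpoint check must be justified rather than merely asserted.
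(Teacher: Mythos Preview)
Your argument is correct and follows the same approach as the paper: reduce to $t\in L$, apply the $\star$-map, translate the window condition to an interval constraint on $t^\star$, and then invoke Lemma~\ref{model sets are rel dense}. The only difference is that you spell out the convexity reason why the constraints for $j=0,\dots,n$ collapse to the two endpoint constraints, whereas the paper simply asserts the equivalence $-1 \le s^\star + jt^\star < \tau-1 \ (\forall j) \Leftrightarrow \frac{-1-s^\star}{n}\le t^\star < \frac{\tau-1-s^\star}{n}$ in one line; your extra care with the half-open endpoint is warranted and makes that step rigorous.
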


\begin{proof}

First note that, to have $s, s+t \in \Lambda$, we must have $t \in \Lambda - \Lambda \subset L$.

Next, for all $s \in \Lambda, t \in L$ we have by the definition of $\Lambda$:
\begin{align*}
s, s+t, s+2t,..., s+nt \in \Lambda &\Leftrightarrow s^\star, s^\star+t^\star,...., s^\star+nt^\star \in [-1 ,\tau-1) \\
&\Leftrightarrow  -1 \leq s^\star, s^\star+t^\star,...., s^\star+nt^\star < \tau-1  \\
&\Leftrightarrow  \frac{-1-s^\star}{n} \leq t^\star < \frac{\tau-1}{n}  -\frac{s^\star}{n}  \\
\end{align*}

Since the interval $[\frac{-1-s^\star}{n} , \frac{\tau-1-s^\star}{n} )$ has non-empty interior, the claims follow now from Lemma~\ref{model sets are rel dense}.
\end{proof}

\medskip

Next, we provide a stronger version of Proposition~\ref{P1}. This will allow us prove a van der Waerden type theorem for the Fibonacci model set.

\begin{lemma}\label{L1} For each $n \in \NN$ there exists some $R>0$ such that, for all $x \in \RR$ the set
$$\Lambda \cap [x, x+R]$$
contains a nontrivial arithmetic progression of length $n$.
\end{lemma}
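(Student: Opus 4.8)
The plan is to fix a single admissible common difference once and for all, and then exploit the fact that, once the common difference is fixed, the set of admissible \emph{starting points} is itself a model set, hence relatively dense. Relative density then supplies a starting point inside every sufficiently long window, and the whole progression fits inside a slightly longer window of length independent of $x$.

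Concretely, I would first choose $t_0 \in L$ with $0 < t_0^\star < \frac{\tau-1}{n}$ and $t_0 > 0$. Such an element exists: the set $\oplam((0,\frac{\tau-1}{n}))$ is a model set (its window is a nonempty bounded open interval), hence relatively dense by Lemma~\ref{model sets are rel dense}, and a relatively dense subset of $\RR$ is unbounded above, so it contains a positive element. Since $t_0^\star > 0$ we have $t_0 \neq 0$, so any progression with common difference $t_0$ is nontrivial. Next I would identify the admissible starting points. Arguing exactly as in Lemma~\ref{P1}, a point $s \in \Lambda$ satisfies $s, s+t_0, \dots, s+(n-1)t_0 \in \Lambda$ if and only if $s^\star + k t_0^\star \in [-1,\tau-1)$ for all $0 \le k \le n-1$. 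Because $t_0^\star > 0$, this chain of conditions collapses to its two extreme members, namely $s^\star \ge -1$ and $s^\star + (n-1)t_0^\star < \tau-1$, so the admissible starting points are exactly
\[
\oplam\!\left(\left[-1,\ \tau-1-(n-1)t_0^\star\right)\right).
\]
Since $(n-1)t_0^\star < \tau-1$, this window is a half-open interval of positive length, in particular with nonempty interior, so Lemma~\ref{model sets are rel dense} guarantees that this set is relatively dense. Thus there is some $\rho>0$ such that every interval of length $\rho$ contains an admissible starting point $s$.

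Finally I would set $R := \rho + n\,t_0$. Given any $x \in \RR$, relative density yields an admissible $s \in [x, x+\rho]$, and then the progression $s, s+t_0, \dots, s+(n-1)t_0$ consists of $n$ points of $\Lambda$ all lying in $[x, x+R]$, giving the required nontrivial arithmetic progression of length $n$. The only point needing care, though it is a routine verification rather than a genuine obstacle, is confirming that the window $[-1,\tau-1-(n-1)t_0^\star)$ for the admissible starting points has nonempty interior, which reduces to the inequality $(n-1)t_0^\star < \tau-1$ and is ensured by the choice $t_0^\star < \frac{\tau-1}{n}$. Everything else is bookkeeping with the $\star$-map and the half-open window $[-1,\tau-1)$.
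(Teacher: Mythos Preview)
Your proof is correct and follows essentially the same strategy as the paper: fix once and for all a common difference $t$ with small positive $t^\star$, observe that the admissible starting points form a model set with nonempty-interior window, and use relative density to place a starting point (hence the whole progression) in any sufficiently long interval. The only cosmetic difference is in bookkeeping: the paper fixes the starting window $[-1,\tfrac{\tau}{2}-1)$ a priori and then chooses $t^\star\in(0,\tfrac{\tau}{2n})$, whereas you first fix $t_0^\star\in(0,\tfrac{\tau-1}{n})$ and then compute the exact starting window $[-1,\tau-1-(n-1)t_0^\star)$; both choices work for the same reason.
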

\begin{proof}

Since $[-1, \frac{\tau}{2}-1)$ has non-empty interior, the set $\oplam([-1, \frac{\tau}{2}-1))$ is relatively dense by Lemma~\ref{model sets are rel dense}. Therefore, there exists some $A>0$ such that $\oplam([-1, \frac{\tau}{2}-1))+[-A,0]=\RR$.

Next, as $(0, \frac{\tau}{2n})$ has non-empty interior, the set $\oplam((0, \frac{\tau}{2n}))$ is relatively dense by Lemma~\ref{model sets are rel dense}. Therefore, there exists some $t \in \oplam((0, \frac{\tau}{2n})) \cap (0, \infty)$.

Let $R:=A+nt$. We claim that this $R$ has the desired property.

Indeed, let $x \in \mathbb R$. Then, since $\oplam([-1, \frac{\tau}{2}-1))+[-A,0]=\RR$, there exists some $s \in  \oplam([-1, \frac{\tau}{2}-1)) \cap [x, x+A]$.

Then, we have for all $0 \leq k \leq n$
\begin{align*}
x&\leq  s < s+t< ...<  s+nt \leq x+A+nt=x+R  \,,\\
s^\star +kt^\star &\in [-1, \frac{\tau}{2}-1)+ (0, \frac{k\tau}{2n}) \subset [-1, \tau-1) \,.\\
\end{align*}

This gives
\[
s, s+t,..., s+nt \in \Lambda \cap [x, x+R] \,.
\]
\end{proof}

\smallskip

Next, let us prove the following simple fact.

\begin{fact}\label{R1} In the Fibonacci CPS, if $a<b$ then $\oplam((a,b))$ is $\frac{\tau^3}{b-a}$ relatively dense.
\end{fact}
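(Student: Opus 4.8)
The plan is to reduce the statement to a bound on the largest gap between consecutive points of $\oplam((a,b))$ in physical space, and to produce that bound by an explicit ``stepping'' argument inside the lattice $\cL$. Writing $\ell=b-a$, I will show that every interval of length $\frac{\tau^3}{\ell}$ meets $\oplam((a,b))$, which yields $\frac{\tau^3}{\ell}$-relative density: if consecutive points are never more than $G$ apart, then every interval of length $G$ contains a point, and hence every $x\in\RR$ lies within $G$ of $\oplam((a,b))$.

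The key input is that $L=L^\star=\ZZ[\tau]$ and that $\star$ is Galois conjugation, so that for every $j\in\ZZ$ the pair $(\tau^j,(\tau')^j)$ lies in $\cL$; its physical length is $\tau^j$ and, since $\tau\tau'=-1$, its internal component is $(\tau')^j=(-1)^j\tau^{-j}$, whose sign alternates with $j$ and whose magnitude is $\tau^{-j}$. Given the window length $\ell$, I choose the integer $k$ with $\tau^{-k}<\ell\le\tau^{1-k}$ and set $v=(\tau^k,(\tau')^k)$ and $w=(\tau^{k+1},(\tau')^{k+1})$. These two lattice vectors have internal components of opposite sign and magnitudes $\alpha=\tau^{-k}$ and $\beta=\tau^{-k-1}$; the golden-ratio identities $1+\tau^{-1}=\tau$ and $2-\tau=\tau^{-2}$ give $\alpha+\beta=\tau^{1-k}\ge\ell$ and $\alpha<\ell$, while the two physical lengths satisfy $\tau^k+\tau^{k+1}=\tau^{k+2}\le\frac{\tau^3}{\ell}$ precisely because of the calibration $\ell\le\tau^{1-k}$.

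With $P$ the one of $v,w$ having positive internal part and $N$ the one having negative internal part (so the two physical lengths sum to $\tau^{k+2}$), the heart of the argument is the stepping claim: for any $z\in\oplam((a,b))$ at least one of $z+P$, $z+N$, $z+P+N$ again lies in $\oplam((a,b))$. I will split into three cases according to whether adding $P$ overflows the top of the window, and, if so, whether $z^\star$ is far enough from the bottom of the window to permit the downward step $N$; in the remaining case the composite step $P+N$ shifts the internal coordinate only by the tiny amount $\pm\tau^{-k-2}$ and, using $\alpha+\beta\ge\ell$ together with $\alpha<\ell$, one checks it stays inside $(a,b)$. In every case the new point lies within physical distance $\tau^{k+2}$ to the right of $z$, so the gap to the right of $z$ is at most $\tau^{k+2}\le\frac{\tau^3}{\ell}$, which closes the proof. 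The main obstacle is exactly this last case of the stepping claim: forcing the composite step back into the window is what pins down the choice of $k$ and relies on the precise golden-ratio relations among $\alpha$, $\beta$ and $\ell$, so a coarser calibration would only deliver a weaker constant such as $\frac{\tau^4}{\ell}$.
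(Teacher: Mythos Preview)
Your argument is correct and delivers the stated constant, but it follows a genuinely different route from the paper. The paper exploits the global self-similarity of the Fibonacci scheme: from the inclusion $\tau\,\oplam(I)\subseteq\oplam(\tau' I)$ one obtains by iteration that $\tau^{2n}\oplam([-1,\tau-1))$ sits inside the model set attached to an interval of length $|\tau'|^{2n-1}$; choosing $n$ so that this short interval can be translated (via some $t\in L$) into $(a,b)$, and tracking the scaling, yields the bound $\tau^{2n}\le\tau^3/(b-a)$. Your proof is instead local: you select two lattice vectors $(\tau^k,(\tau')^k)$ and $(\tau^{k+1},(\tau')^{k+1})$ calibrated to the window length and show, by a three-case analysis in internal space, that from any point of $\oplam((a,b))$ one can advance in physical space by at most $\tau^{k+2}\le\tau^3/\ell$ while keeping the star-image inside $(a,b)$. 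Both proofs ultimately rest on the same arithmetic fact that powers of $\tau$ lie in $\ZZ[\tau]$ with conjugates of alternating sign and geometrically decreasing magnitude, but the paper's version is more conceptual (a reduction to the known relative density of the Fibonacci model set via inflation), while yours is self-contained, closer in spirit to a three-distance theorem, and does not need to invoke the Fibonacci model set itself as a reference.

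Two small remarks on your write-up. First, you should state explicitly that $\oplam((a,b))$ is nonempty (density of $L^\star$) and that the symmetric argument with $-P,-N$ gives steps to the left, so the set is unbounded both ways; only then does ``gap at most $G$ to the right of every point'' imply ``every interval of length $G$ meets the set''. Second, in your Subcase 2b the inequality $\alpha+\beta\ge\ell$ is not actually what forces $z^\star+(p-n)\in(a,b)$: what you use there is only $p<\ell$ and $n<\ell$, both of which follow from $\beta<\alpha<\ell$. The condition $\alpha+\beta\ge\ell$ merely reflects that you chose the \emph{smallest} admissible $k$, which is what optimises the physical bound.
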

\begin{proof}
Let us observe first that for any interval $I$, since $\tau^2=\tau+1$ we have
\begin{align*}
\tau \oplam(I)&=\tau \{ m+n \tau : m,n \in \ZZ, m+n \tau' \in I\} \\
&= \{ m\tau+n \tau^2 : m,n \in \ZZ, m+n \tau' \in I \}\\
&= \{ m\tau+n \tau+n : m,n \in \ZZ, \tau'(m+n \tau') \in \tau' I \}\\
&= \{ (m+n) \tau+n : m,n \in \ZZ, \tau'(m+n)+n \in \tau' I\}\subseteq \oplam(\tau'I) \,.
\end{align*}

From here, it follows immediately by induction that
\begin{displaymath}
\tau^{2n} \oplam([-1, \tau-1)) \subseteq \oplam\left([-(\tau')^{2n}, (\tau')^{2n}(\tau-1))\right) \,.
\end{displaymath}

Since $\oplam([-1, \tau-1))$ is $\tau$-relatively dense, we get that $\oplam([-(\tau')^{2n}, (\tau')^{2n}(\tau-1)))$ is $\tau^{2n}$-relatively dense.

Note that $[-(\tau')^{2n}, (\tau')^{2n}(\tau-1))$ is an interval of length $|\tau'|^{2n-1}$.

Next, pick any $a <b$ in $\RR$. Pick $n \in \ZZ$ such that
\begin{displaymath}
|\tau'|^{2n-1}< b-a\leq |\tau'|^{2n-3} \,.
\end{displaymath}
This is possible as $\lim_{n \to \infty} |\tau'|^{2n-1}=0$ and $\lim_{n \to -\infty}|\tau'|^{2n-1}=\lim_{n \to \infty}|\tau|^{2n-1}=\infty \,.$

Since $|\tau'|^{2n-1}< b-a$ and $L^\star$ is dense in $\mathbb R$, there exists some $(t,t^\star)\in L$ such that $t^\star+[-(\tau')^{2n}, (\tau')^{2n}(\tau-1)) \subseteq (a,b)$. Then
\begin{displaymath}
\oplam([-(\tau')^{2n}, (\tau')^{2n}(\tau-1))) \subseteq \oplam(t^\star+(a,b))=-t+  \oplam((a,b)) \,.
\end{displaymath}
Since $\oplam([-(\tau')^{2n}, (\tau')^{2n}(\tau-1)))$ is $\tau^{2n}$-relatively dense, it follows that
$\oplam((a,b))$ is $\tau^{2n}$-relatively dense.

Finally $b-a\leq \tau'^{2n-3}$ gives
\begin{displaymath}
\tau^{2n}=\frac{\tau^3}{|\tau'|^{2n-3}} \leq \frac{\tau^3}{b-a} \,.
\end{displaymath}

This proves the fact.
\end{proof}

\begin{remark} Using Fact~\ref{R1} we can give an upper-bound on $R$ in Lemma~\ref{L1}.

Indeed, in the proof of Lemma~\ref{L1}, by Fact~\ref{R1} we can chose $A=2 \tau^2 $ and $0< t \leq 2n \tau^2  $, thus
\begin{displaymath}
R = 2 \tau^2 + 2n^2 \tau^2=2(n^2+1) \tau^2
\end{displaymath}
works.
\end{remark}
\medskip

As an immediate consequence of Lemma~\ref{L1} we get the following result.

\begin{theorem}[van der Waerden's theorem for Fibonacci]\label{TV1} For any given positive integers $r$ and $k$, there is some $R>0$ such that, for each coloring of the points of $\Lambda$ with $r$ colors, and for each $x\in \RR$ the set $\Lambda \cap [x, x+R]$ contains $k$ elements of the same color in arithmetic progression.
\end{theorem}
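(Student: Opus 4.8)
The plan is to combine Lemma~\ref{L1} with the classical van der Waerden Theorem (Theorem~\ref{van der W}) via a standard index-transfer argument. The key observation is that a monochromatic arithmetic progression among the \emph{indices} of a long arithmetic progression of points automatically yields a monochromatic arithmetic progression of \emph{points}.

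First I would set $N := W(r,k)$, the van der Waerden number for $r$ colours and target length $k$. Applying Lemma~\ref{L1} with this value $N$, I obtain a radius $R>0$ with the property that for every $x \in \RR$ the set $\Lambda \cap [x, x+R]$ contains a nontrivial arithmetic progression $a_1, a_2, \dots, a_N$ of length $N$; I write $a_j = a_1 + (j-1)t$ with $t \neq 0$. This $R$ is the constant I claim works in the theorem.

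Next I would fix an arbitrary colouring $c$ of $\Lambda$ with $r$ colours and an arbitrary $x \in \RR$. Using the points produced above, I define an induced colouring of the index set $\{1, 2, \dots, N\}$ by assigning to $j$ the colour $c(a_j)$. Since $N = W(r,k)$, Theorem~\ref{van der W} produces a monochromatic arithmetic progression of \emph{indices} $i, i+d, i+2d, \dots, i+(k-1)d$ inside $\{1, \dots, N\}$, for some $d \geq 1$. The corresponding points satisfy $a_{i+md} = a_1 + (i+md-1)t$, so that $a_{i+(m+1)d} - a_{i+md} = d\,t$ is constant and nonzero; hence $a_i, a_{i+d}, \dots, a_{i+(k-1)d}$ form a nontrivial arithmetic progression of length $k$. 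By construction these $k$ points all receive the same colour and all lie in $\Lambda \cap [x, x+R]$, which is exactly the assertion of the theorem.

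There is no serious obstacle here, and indeed the result is genuinely an immediate corollary. The only point requiring care is the index-to-point transfer: one must verify that selecting an arithmetic subsequence of indices (common difference $d$) from an arithmetic progression of points (common difference $t$) again produces an arithmetic progression of points, now with common difference $d\,t \neq 0$. Everything else — membership in the interval $[x,x+R]$ and the common colour — is inherited directly from the subset relation and the definition of the induced colouring.
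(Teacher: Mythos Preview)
Your proof is correct and follows essentially the same approach as the paper: both set $N=W(r,k)$, invoke Lemma~\ref{L1} to find a length-$N$ arithmetic progression inside $\Lambda\cap[x,x+R]$, transfer the colouring to the index set $\{1,\dots,N\}$, and apply the classical van der Waerden theorem to extract a monochromatic length-$k$ sub-progression. Your write-up is in fact slightly more careful than the paper's in explicitly verifying that the extracted subsequence has common difference $dt\neq 0$.
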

\begin{proof} Let $N$ be so that the van der Waerden's theorem (Theorem~\ref{van der W}) holds for $r, k$ applied to $\{ 1,2,...N\}$.

By Lemma~\ref{L1}, there exists some  $R>0$ such that, for all $x \in \RR$ the set
$$\Lambda \cap [x, x+R]$$
contains a nontrivial arithmetic progression of length $n$.

We claim that this $R$ works.

Consider any coloring of $\Lambda$ with $r$ colors. Let $x \in \RR$ be arbitrary.

By Lemma~\ref{L1}, we can find $s,t$ such that, $t \neq 0$, and for all $1 \leq k \leq N$ we have
\[
a_j:= s+jt \in \Lambda \cap [x,x+A] \,.
\]

Now, color the set $\{ 1, 2,... ,N \}$ the following way: color $j$ with the color of $a_j$. By Theorem~\ref{van der W}, there exists an arithmetic progression $l_1<l_2<..<l_k$ of length $k$ of the same color.

Then, $s+l_1t, s+l_2t,..., s+l_kt \in  \Lambda \cap [x, x+A]$ are $k$ elements of the same color in arithmetic progression.
\end{proof}

\section{Arithmetic progressions in model sets}

In this section we show that model sets in $\RR^d$ contain arbitrary long arithmetic progressions, and prove a van der Waerden type result for model sets.

\smallskip

\begin{tcolorbox}
For this entire section $(\RR^d, H, \cL)$ is a fixed CPS and $\Lambda=\oplam(W)$ is a fixed model set in this CPS.
\end{tcolorbox}

\smallskip

Next we prove the following preliminary lemma which we will need in this section.

\begin{lemma}\label{U rel dense} Let $(\RR^d,H, \cL)$ be a CPS and let $0 \in U \subset H$ be open. Then $\oplam(U) \backslash \{ 0 \}$ is relatively dense.
\end{lemma}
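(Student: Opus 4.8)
The plan is to reduce the statement to Lemma~\ref{model sets are rel dense} together with one soft metric observation: deleting a single point from a relatively dense subset of $\RR^d$ cannot destroy relative density. First I would record that, since $U$ is open with $0 \in U$, it has non-empty interior, so Lemma~\ref{model sets are rel dense} applies and $\oplam(U)$ is relatively dense; fix $R$ with $\oplam(U)$ being $R$-relatively dense. I would also note that $(0,0) \in \cL$ forces $0 \in L$ and $0^\star = 0 \in U$, so $0 \in \oplam(U)$; thus the deletion in the statement is genuine, and the whole content of the lemma is precisely that removing this one point is harmless.

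The key step is the general claim: if $S \subseteq \RR^d$ is $R$-relatively dense and $p \in \RR^d$, then $S \setminus \{p\}$ is $4R$-relatively dense. To prove it, fix $x \in \RR^d$ and use $R$-relative density to pick $s_0 \in S$ with $d(x, s_0) \le R$. If $s_0 \neq p$ we are done. Otherwise $s_0 = p$, so $d(x,p) \le R$; choosing any unit vector $u$ and a point $s_1 \in S$ with $d(x + 3Ru, s_1) \le R$, we obtain $d(x, s_1) \ge 3R - R = 2R > R \ge d(x,p)$, whence $s_1 \neq p$ and $d(x, s_1) \le d(x, x+3Ru) + d(x+3Ru, s_1) \le 4R$. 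In either case $S \setminus \{p\}$ contains a point within $4R$ of $x$, so it is $4R$-relatively dense. Applying this with $S = \oplam(U)$ and $p = 0$ finishes the proof.

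I do not expect a serious obstacle here: the only thing requiring an argument is the elementary metric claim above, and the case split (the nearby point is forced to be $p$, so we jump a distance $3R$ in any direction to reach a genuinely different point) handles it in one line. As a structural cross-check valid in an arbitrary LCAG, one could instead pick an open $V$ with $0 \in V$ and $V + V \subseteq U$, choose a nonzero $t \in \oplam(V)$ (which exists since $\oplam(V)$ is relatively dense, hence infinite), and verify $t + \oplam(V) \subseteq \oplam(U)$ via $(t+x)^\star = t^\star + x^\star \in V + V \subseteq U$; this exhibits a relatively dense family of nonzero points of $\oplam(U)$, though it still requires discarding the single point $0$ should it occur, so the metric claim remains the cleanest route.
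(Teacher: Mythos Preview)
Your proof is correct and follows essentially the same approach as the paper: both invoke Lemma~\ref{model sets are rel dense} to get that $\oplam(U)$ is $R'$-relatively dense, then do a case split on whether the nearby point happens to be $0$. The only cosmetic difference is how the ``bad'' case is handled: the paper fixes in advance some nonzero $y \in \oplam(U)$ (which exists since $\oplam(U)$ is infinite) and uses the constant $R' + d(y,0)$, whereas you jump a distance $3R$ in an arbitrary direction to force a genuinely different nearby point and obtain the universal constant $4R$. Both arguments are equally elementary.
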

\begin{proof}

First, the set $\oplam(U)$ is relatively dense by Lemma~\ref{model sets are rel dense}. Therefore, there exists some $R'>0$, such that $\RR^d=\oplam(U)+B_{R'}(0)$. In particular, there exists some $y \in \oplam(U) \backslash \{ 0\}$.

Fix such $y$, and define $R=R'+d(y,0)$. We claim that $\oplam(U) \backslash \{ 0\}$ is $R$-relatively dense.

Indeed, let $z \in \RR^d$. Then $z \in \oplam(U)+B_{R'}(0)$, and hence, there exists some $a \in \oplam(U)$ such that $d(z,a) < R'$.

We split the proof now in two cases.

\begin{itemize}
    \item{} $a \neq 0$.
Then $a \in \oplam(U) \backslash \{ 0\}$ and $d(z,a) < R' <R$.
    \item{} $a=0$.
In this case we have  $y \in  \oplam(U) \backslash \{ 0\}$ and $d(z,y)  \leq d(z,0)+d(0,y) <R'+d(0,y)=R$.
\end{itemize}

Therefore, in both cases there exists some $x \in  \oplam(U) \backslash \{ 0\}$ such that $d(z,x) <R$.

This completes the proof.
\end{proof}

\medskip

Now, we can prove that model sets have arbitrary long arithmetic progressions.

\begin{proposition}\label{P2} Let $n \in \NN$. Then, for each $s \in \oplam(\mbox{Int}(W))$ there exists an open set $0 \in U \subset H$ such that, $\oplam(U) \backslash \{ 0\}$ relatively dense in $\RR^d$, and for each $t \in \oplam(U) \backslash \{ 0 \}$ we have
\begin{displaymath}
s, s+t, s+2t,..., s+nt \in \Lambda=\oplam(W) \,.
\end{displaymath}

\end{proposition}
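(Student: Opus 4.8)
The plan is to translate the membership condition through the $\star$-map, exactly as in the Fibonacci case (Lemma~\ref{P1}), and then exploit that $s^\star$ lies in the interior of $W$. The starting observation is the dictionary coming from Definition~\ref{def:model set}: since $\star$ is a group homomorphism on $L$, we have $(s+kt)^\star = s^\star + k t^\star$, so for $t \in L$ one has
\[
s, s+t, \ldots, s+nt \in \oplam(W) \iff s^\star, s^\star + t^\star, \ldots, s^\star + nt^\star \in W \,.
\]
Thus it suffices to produce an open neighbourhood $0 \in U \subset H$ so that $t^\star \in U$ forces $s^\star + k t^\star \in W$ for every $0 \le k \le n$.

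Because $s \in \oplam(\mbox{Int}(W))$ we have $s^\star \in \mbox{Int}(W)$, so there is an open neighbourhood $V$ of $0$ in $H$ with $s^\star + V \subseteq W$. Next I would use the topological-group structure of $H$: for each $k$ the $k$-fold addition map $u \mapsto ku$ is continuous and sends $0$ to $0 \in V$, so its preimage of $V$ contains an open neighbourhood $U_k \ni 0$ with $kU_k \subseteq V$. Setting $U := \bigcap_{k=0}^n U_k$ gives an open neighbourhood of $0$ (a finite intersection) with the property that $kU \subseteq V$, and hence $s^\star + kU \subseteq s^\star + V \subseteq W$, for all $0 \le k \le n$.

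With this $U$ the verification is immediate: if $t \in \oplam(U) \setminus \{0\}$, then $t \in L$, $t \neq 0$ and $t^\star \in U$, so $s^\star + k t^\star \in s^\star + kU \subseteq W$ for every $0 \le k \le n$; by the dictionary above this is exactly $s, s+t, \ldots, s+nt \in \oplam(W)$. Finally, since $0 \in U$ is open, Lemma~\ref{U rel dense} guarantees that $\oplam(U)\setminus\{0\}$ is relatively dense, completing the argument.

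The only genuinely nontrivial point, and the one I would be most careful about, is the uniform shrinking in the second paragraph: in a general LCAG $H$ (not a vector space) one cannot simply ``scale down'' a neighbourhood, so the argument must rely on continuity of the group operation. Equivalently, one may invoke the standard fact that for any neighbourhood $V$ of $0$ there is a neighbourhood $U$ of $0$ with $\underbrace{U + \cdots + U}_{n} \subseteq V$; since $0 \in U$, this yields $kU \subseteq V$ for all $k \le n$ at once. Everything else is a direct transcription of the $\star$-map dictionary and an application of the preceding lemma.
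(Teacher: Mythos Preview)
Your argument is correct and is essentially the same as the paper's: both translate the problem to $H$ via the $\star$-map, use that $s^\star \in \mbox{Int}(W)$ to find an open $U\ni 0$ with $s^\star + kU \subseteq W$ for all $0\le k\le n$, and then invoke Lemma~\ref{U rel dense}. The paper obtains $U$ in one step via the standard fact $s^\star + \underbrace{U+\cdots+U}_{n} \subset \mbox{Int}(W)$ (exactly the alternative you mention in your last paragraph), whereas you first pass through the intermediate $V$ and the maps $u\mapsto ku$; the content is identical.
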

\begin{proof}
 Since $s^\star \in \mbox{Int}(W)$, which is open, we can find an open set $U$ such that
\begin{displaymath}
s^\star+\underbrace{U+U+..+U}_{n \mbox{ times }} \subset  \mbox{Int}(W) \,.
\end{displaymath}

From here, it follows immediately that
\begin{equation}\label{eq21}
s, s+t, s+2t,..., s+nt \in \Lambda \qquad \forall t \in \oplam(U) \,.
\end{equation}

The claim follows now from \eqref{eq21} and Lemma~\ref{U rel dense}.

\end{proof}

\medskip

Next, we show that for each $n$ we can find arithmetic progressions of length $n$ within bounded gaps.

\begin{lemma}\label{L2} For  each $n \in \NN$ there exists some $R>0$ such that, for all $x \in \RR^d$ the set
$$\Lambda \cap B_R(x)$$
contains a nontrivial arithmetic progression of length $n$.

\end{lemma}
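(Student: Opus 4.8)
The plan is to mirror the one-dimensional argument of Lemma~\ref{L1}, replacing the explicit shrinking of the Fibonacci window by a topological-group version in the internal space $H$. The key point is to produce, simultaneously and uniformly in $x$, a relatively dense family of admissible starting points together with a single bounded common difference that works for all of them; this is the uniform strengthening of Proposition~\ref{P2} needed to fit the whole progression into a fixed ball.

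First I would fix a point $w_0 \in \mbox{Int}(W)$, which is possible since $W$ is a model-set window and hence has nonempty interior. Writing $\mbox{Int}(W) \supseteq w_0 + N_0$ for some open neighbourhood $0 \in N_0 \subset H$, I would invoke continuity of the $(n+1)$-fold addition map $H^{n+1} \to H$ at the origin to obtain an open neighbourhood $0 \in U \subset H$ with $\underbrace{U + \cdots + U}_{n+1} \subseteq N_0$. Setting $W' := w_0 + U$, this gives the crucial containment
\[
W' + \underbrace{U + \cdots + U}_{n} = w_0 + \underbrace{U + \cdots + U}_{n+1} \subseteq w_0 + N_0 \subseteq W \,,
\]
so that whenever $s^\star \in W'$ and $t^\star \in U$ we have $s^\star + k t^\star \in W$ for all $0 \le k \le n$, i.e. $s, s+t, \dots, s+nt \in \Lambda$. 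The decisive feature is that the same internal window $U$ serves every starting point with $s^\star \in W'$, rather than depending on the individual $s$ as in Proposition~\ref{P2}.

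Next I would extract the two density facts. Since $W'$ is open and nonempty, $\oplam(W')$ is relatively dense by Lemma~\ref{model sets are rel dense}; fix $A>0$ so that $\oplam(W')$ is $A$-relatively dense. Since $0 \in U$ is open, $\oplam(U)\setminus\{0\}$ is relatively dense, hence nonempty, by Lemma~\ref{U rel dense}; fix one element $t \in \oplam(U)\setminus\{0\}$ and put $R := A + n\|t\| + 1$. For an arbitrary $x \in \RR^d$, $A$-relative density yields $s \in \oplam(W')$ with $\|s-x\| \le A$; then $\|(s+kt)-x\| \le A + k\|t\| < R$ for $0 \le k \le n$, so $s, s+t, \dots, s+nt$ all lie in $B_R(x)$, while the containment above places them in $\Lambda$. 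As $t \neq 0$ this is a nontrivial arithmetic progression $s, s+t, \dots, s+nt \in \Lambda \cap B_R(x)$ of length $n+1 \ge n$, and $R$ depends only on $n$, as required.

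I expect the only genuinely delicate step to be the construction of the uniform internal window $U$: one must secure a single neighbourhood of $0$ in the abstract LCAG $H$ whose $n$-fold Minkowski sum still fits inside $W$ after translation by the shrunken starting window $W'$, and it is precisely this that forces the fixed common difference $t$ to be usable for every admissible $s$ at once. Once this uniformity is in place, combining the two relative-density statements to bound the progression inside a fixed ball is entirely routine.
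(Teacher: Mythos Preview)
Your proposal is correct and follows essentially the same route as the paper's own proof: both arguments produce an open starting window $V$ (your $W'=w_0+U$) and an open step window $U\ni 0$ with $V+\underbrace{U+\cdots+U}_{n}\subset W$, then combine the relative density of $\oplam(V)$ (via Lemma~\ref{model sets are rel dense}) with the existence of a nonzero bounded $t\in\oplam(U)$ (via Lemma~\ref{U rel dense}) to place a length-$(n+1)$ progression in $\Lambda\cap B_R(x)$ for a fixed $R$. The only cosmetic differences are that the paper asserts the existence of $V,U$ abstractly rather than building $W'=w_0+U$ explicitly, and bounds $\|t\|$ by the relative-density constant of $\oplam(U)\setminus\{0\}$ rather than fixing one $t$ and using its norm directly.
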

\begin{proof}

Since $W$ has non-empty interior, we can find an open set $V \neq \emptyset$ and some open $0 \in U$ such that

\begin{displaymath}
V+\underbrace{U+U+..+U}_{n \mbox{ times }} \subset  W \,.
\end{displaymath}

Since $V$ is open, there exists some $R'>0$ such that $\oplam(V)+B_{R'}(0)=\RR^d$.

Since $0\in U$ is open, by Lemma~\ref{U rel dense} there exists some $R''>0$ such that $\left(\oplam(U)\backslash \{ 0\} \right)+B_{R''}(0)=\RR^d$.

Let $R:=R'+nR''$. We show that this $R$ works.

Let $x \in \RR^d$ be arbitrary. Then, there exists some $s \in \oplam(V) \cap B_{R'}(x)$.

Let $t \in \left(\oplam(U) \backslash \{0 \}\right) \cap B_{R''}(0)$. Then, since $0 \in U$, for all $0 \leq k \leq n $ we have
\begin{displaymath}
\underbrace{t^\star+t^\star+..+t^\star}_{k \mbox{ times }}+\underbrace{0+0+..+0}_{n-k \mbox{ times }} \in \underbrace{U+U+..+U}_{n \mbox{ times }}
\end{displaymath}
Therefore, for all $0 \leq k \leq n$ we have
\begin{displaymath}
s^\star+kt^\star \in V+\underbrace{U+U+..+U}_{n \mbox{ times }} \subset  W \Rightarrow s+kt \in \oplam(W)=\Lambda \,.
\end{displaymath}

Moreover, for all $0 \leq k \leq n$ we have
\begin{align*}
d(s+kt,x) &\leq d(s+kt, s)+d(s,x) = d(kt, 0) + d(s,x) \leq  k d(t,0)+ d(s,x) \\
&< k R''+R' \leq nR''+R'=R \,.
\end{align*}

This gives
\begin{displaymath}
s, s+t, s+2t, \dots , s+nt \in \Lambda \cap B_R(x) \,.
\end{displaymath}
\end{proof}

We now can prove:
\begin{theorem}[van der Waerden's theorem for model sets]\label{TV2} Let $\Lambda \subset \RR^d$ be a model set.

Then, for any given positive integers $r$ and $k$, there is some $R>0$ such that, if the points of $\Lambda$ are colored, each with one of $r$ different colors, then, for each $x\in \RR^d$ the set $\Lambda \cap B_R(x)$ contains a nontrivial arithmetic progression of length $k$, with all elements of the same color.
\end{theorem}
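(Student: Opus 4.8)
The plan is to mirror the proof of the Fibonacci case (Theorem~\ref{TV1}), replacing the one-dimensional Lemma~\ref{L1} by its $d$-dimensional analogue Lemma~\ref{L2}, and transferring the combinatorics to the integers via the classical van der Waerden Theorem (Theorem~\ref{van der W}). All the geometry has already been done; what remains is a colouring argument.

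First I would fix the integers $r,k$ and set $N:=W(r,k)$, the van der Waerden number, so that any $r$-colouring of $\{1,2,\dots,N\}$ admits a monochromatic arithmetic progression of length $k$. I would then apply Lemma~\ref{L2} with $n=N$ to obtain a radius $R>0$ such that, for \emph{every} $x\in\RR^d$, the set $\Lambda\cap B_R(x)$ contains a nontrivial arithmetic progression $s,s+t,s+2t,\dots,s+Nt$ with $t\neq 0$. This $R$ is the one claimed in the statement; the key point is that it depends only on $r$ and $k$ (through $N$), and on neither the colouring nor the centre $x$.

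Next, given an arbitrary $r$-colouring of $\Lambda$ and an arbitrary $x\in\RR^d$, I would extract from Lemma~\ref{L2} the points $a_j:=s+jt\in\Lambda\cap B_R(x)$ for $0\le j\le N$, and induce a colouring $c$ of $\{1,2,\dots,N\}$ by declaring $c(j)$ to be the colour of $a_j$. By Theorem~\ref{van der W} there is a monochromatic arithmetic progression $\ell_1<\ell_2<\dots<\ell_k$ in $\{1,\dots,N\}$, with common difference $m:=\ell_2-\ell_1\ge 1$. The points $a_{\ell_1},a_{\ell_2},\dots,a_{\ell_k}$ all carry the same colour, and they form an arithmetic progression in $\RR^d$, since $a_{\ell_{j+1}}-a_{\ell_j}=(\ell_{j+1}-\ell_j)\,t=m\,t$ is constant and nonzero (as $m\ge 1$ and $t\neq 0$). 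As each $a_{\ell_j}$ lies in $\Lambda\cap B_R(x)$, this is precisely the sought monochromatic progression of length $k$.

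I do not expect a genuine obstacle: the geometric difficulty, namely producing long progressions inside balls of a radius uniform in $x$, is entirely absorbed into Lemma~\ref{L2}, whose proof builds the progression from the window $W$ using the openness of $\mbox{Int}(W)$ together with the relative denseness of $\oplam(U)\backslash\{0\}$ from Lemma~\ref{U rel dense}. The only point needing care is the elementary bookkeeping that a subsequence of an arithmetic progression indexed by an arithmetic progression of integers is again an arithmetic progression with nonzero common difference, which is what secures nontriviality of the monochromatic progression.
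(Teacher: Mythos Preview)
Your proposal is correct and follows essentially the same route as the paper: choose $N=W(r,k)$, invoke Lemma~\ref{L2} to get a uniform radius $R$ producing a length-$N$ progression inside every $\Lambda\cap B_R(x)$, transfer the colouring to $\{1,\dots,N\}$, apply the classical van der Waerden theorem, and pull the resulting monochromatic integer progression back along $j\mapsto s+jt$. Your extra remark that the induced common difference $mt$ is nonzero (since $m\ge 1$ and $t\neq 0$) is a welcome clarification of nontriviality that the paper leaves implicit.
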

\begin{proof} Let $N$ be so that the van der Waerden's theorem (Theorem~\ref{van der W}) holds for $r,k$ applied to $\{ 1,2,...N\}$.

By Lemma~\ref{L2} there exists some compact $R$ such that, for all $x \in \RR^d$ the set
$$\Lambda \cap B_R(x)$$
contains a nontrivial arithmetic progression of length $N$.

We claim that this $R$ works.

Consider any coloring of $\Lambda$ with $r$ colors. Let $x \in \RR^d$ be arbitrary.

By Lemma~\ref{L1}, we can find $s,t$ such that, $t \neq 0$ and for all $1 \leq k \leq N$
\[
a_k:= s+kt \in \Lambda \cap B_R(x) \,.
\]
Moreover, the elements $a_k$ are pairwise distinct.

Now, color the set $\{ 1, 2,... ,N \}$ by coloring $k$ with the color of $a_k$. By van der Waerden's theorem, there exists an arithmetic progression $l_1<l_2<..<l_k$ of length $k$ of the same color.

Then, $s+l_1t, s+l_2t,..., s+l_kt \in  \Lambda \cap B_R(x)$ are $k$ elements of the same color in arithmetic progression.
\end{proof}

\section{Arithmetic progressions in Meyer sets}\label{sect:mey}

In this section we show that any Meyer set $\Lambda \subset \RR^d$ has arbitrary long arithmetic progressions.

\smallskip

\begin{tcolorbox}
For this entire section $\Lambda\subset \RR^d$ is a fixed Meyer set.
\end{tcolorbox}

\smallskip

\begin{theorem}\label{T1} Let $\Lambda\subset \RR^d$  be a Meyer set. Then, for each positive integer $k$, there exists some $R>0$, such that, for all $x \in \RR^d$ the set $\Lambda \cap B_R(x)$ contains a nontrivial arithmetic progression of length $k$.
\end{theorem}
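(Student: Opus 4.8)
The plan is to embed $\Lambda$ into a model set and then transfer the van der Waerden theorem for model sets (Theorem~\ref{TV2}) by colouring that ambient model set according to the local structure of $\Lambda$. Since $\Lambda$ is a Meyer set, Theorem~\ref{Meyer characterisation} furnishes a model set $M:=\oplam(W)$ with $\Lambda\subseteq M$, and $\Lambda$ is relatively dense, so I would fix $\rho>0$ with $\overline{B_\rho(x)}\cap\Lambda\neq\emptyset$ for every $x\in\RR^d$. The decisive observation is that $M$ is itself a Meyer set, so $M-M$ is uniformly discrete, whence $S:=(M-M)\cap\overline{B_\rho(0)}$ is a \emph{finite} set. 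For $x\in M$ I define the local $\Lambda$-patch $P(x):=(\Lambda-x)\cap\overline{B_\rho(0)}$. Because $\lambda-x\in M-M$ whenever $\lambda\in\Lambda\subseteq M$ and $x\in M$, each $P(x)$ is a subset of the fixed finite set $S$, so $x\mapsto P(x)$ is a colouring of $M$ by at most $2^{\#S}$ colours, and every $P(x)$ is nonempty by the choice of $\rho$.

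Given $k$, I would then apply Theorem~\ref{TV2} to the model set $M$ with $r=2^{\#S}$ colours and length $k$. This yields some $R>0$ so that for each $x$ the set $M\cap B_R(x)$ contains a monochromatic nontrivial progression $x_0,\,x_0+t,\dots,\,x_0+(k-1)t$ with $t\neq 0$ and $P(x_0+it)=P(x_0)$ for all $i$. Choosing any $p\in P(x_0)$, the equality of patches forces $p\in P(x_0+it)$, that is $x_0+it+p\in\Lambda$ for every $i$. Hence $x_0+p,\ x_0+p+t,\dots,\ x_0+p+(k-1)t$ is a nontrivial arithmetic progression of length $k$ lying entirely in $\Lambda$, and since $\lvert p\rvert\le\rho$ it sits inside $B_{R+\rho}(x)$; replacing $R$ by $R+\rho$ proves the theorem.

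The main obstacle, and the heart of the argument, is ensuring that the patch colouring uses only finitely many colours, since otherwise Theorem~\ref{TV2} does not apply. This rests entirely on colouring points of $M$ rather than arbitrary points of $\RR^d$: only then do the vectors $\lambda-x$ recording nearby points of $\Lambda$ land in the uniformly discrete set $M-M$, confining each patch to the finite window $S$. Once finiteness is secured the transfer is immediate, because a monochromatic progression in $M$ has, by the very definition of the colour, a point of $\Lambda$ in the same relative position $p$ at each of its terms, so shifting the whole progression by $p$ produces the desired progression inside $\Lambda$. The only routine points left to verify are that $M-M$ is uniformly discrete (which holds as $M$ is a model set, hence Meyer), and that the shifted points remain pairwise distinct and within a controlled ball.
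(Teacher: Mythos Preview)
Your argument is correct and follows the same overall strategy as the paper: embed $\Lambda$ in a model set $M$, colour $M$ with finitely many colours so that a monochromatic arithmetic progression can be translated into $\Lambda$, and then invoke Theorem~\ref{TV2}.

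The difference lies in how the finite colouring is produced. The paper quotes \cite[Lemma~5.5.1]{NS11} to obtain a finite set $F=\{t_1,\dots,t_r\}$ with $M\subset \Lambda+F$, and then colours $x\in M$ by the least $j$ with $x-t_j\in\Lambda$; a monochromatic progression $a_1,\dots,a_k$ of colour $j$ is shifted by $-t_j$ to land in $\Lambda$. You instead colour $x\in M$ by its local $\Lambda$-patch $P(x)=(\Lambda-x)\cap\overline{B_\rho(0)}$ and shift by any $p\in P(x_0)$. Your colouring is finite because $P(x)\subset (M-M)\cap\overline{B_\rho(0)}$, a finite set by uniform discreteness of $M-M$; this is more self-contained, since it uses only that model sets are Meyer and avoids the external lemma. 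The paper's colouring is in turn coarser (at most $|F|$ colours rather than $2^{\#S}$), which would give a smaller constant $R$ if one cared to track it. Either way the transfer step is identical in spirit: the colour of a point records a fixed vector by which it can be shifted into $\Lambda$, and monochromaticity guarantees that the same shift works for the whole progression.
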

\begin{proof} Since $\Lambda$ is a Meyer set, by Theorem~\ref{Meyer characterisation} there exists a model set $\oplam(W)$ such that $\Lambda \subset \oplam(W)$.

By \cite[Lemma~5.5.1]{NS11} there exists a finite set $F=\{ t_1,.., t_r \}$ such that
\begin{displaymath}
\oplam(W) \subset \Lambda +F \,.
\end{displaymath}

Let $R'$ be the constant given by Theorem~\ref{TV2} for $\oplam(W)$, $r$ colors and arithmetic progressions of length $k$. Let
\begin{displaymath}
R:= \max \{ R'+d(t_j,0) : 1 \leq j \leq r \} \,.
\end{displaymath}
We show that this $R$ works.

We color $\oplam(W)$ with $r$ colors the following way: Since $\oplam(W) \subset \Lambda +F$, for each $x \in \oplam(W)$ there exists some $1 \leq j \leq r$ such that $x \in t_j+\Lambda$. Then, color each $x \in \oplam(W)$ by the color $\min\{ j: x \in t_j+\Lambda\}$. We use the minimum since some $x\in \oplam(W)$ may belong to multiple sets $t_j +\Lambda$, in which case we need to make a choice (any choice here makes the rest of the proof work).

Let $x \in \RR^d$ be arbitrary.

Now, by Theorem~\ref{TV2}, there exists a monochromatic nontrivial arithmetic progression $a_1,..,a_k$ of length $k$ in $\oplam(W) \cap B_{R'}(x)$. By our construction of the coloring, there exists some $j$ such that
\begin{displaymath}
a_1,..,a_k \in t_j +\Lambda \,.
\end{displaymath}

It follows that $a_1-t_j, a_2-t_j,.., a_k-t_j \in \Lambda$ is a non-trivial arithmetic progression of length $k$. Moreover, for each $1 \leq i \leq k$ we have
\begin{displaymath}
d(a_k-t_j,x) \leq d(a_k,x)+d(-t_j,0) <  R'+d(t_j,0)  \leq R \,.
\end{displaymath}
\end{proof}

\smallskip

As an immediate consequence we get the following.
\begin{theorem}[van der Waerden's theorem for Meyer sets]\label{TV5}  Let $\Lambda\subset \RR^d$  be a Meyer set. For any given positive integers $r$ and $k$, there is some $R>0$ such that, if the points of $\Lambda$ are colored, each with one of $r$ different colors, then, for each $x\in \RR^d$ the set $\Lambda \cap B_R(x)$ contains $k$ elements of the same color in arithmetic progression.
\end{theorem}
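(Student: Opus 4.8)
The plan is to deduce this coloured statement from the uncoloured Theorem~\ref{T1} in exactly the same way that Theorem~\ref{TV2} was deduced from Lemma~\ref{L2}; the classical van der Waerden Theorem~\ref{van der W} supplies the combinatorial bridge. In other words, Theorem~\ref{T1} has already done all the analytic work of locating long progressions inside bounded balls, and this result is a purely combinatorial upgrade.

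First I would fix $r$ and $k$ and invoke Theorem~\ref{van der W} to obtain the van der Waerden number $N := W(r,k)$, so that every colouring of $\{1,2,\dots,N\}$ with $r$ colours admits a monochromatic arithmetic progression of length $k$. Next I would apply Theorem~\ref{T1} to the Meyer set $\Lambda$ with the single length parameter $N$, producing a radius $R>0$ such that for every $x\in\RR^d$ the set $\Lambda\cap B_R(x)$ contains a nontrivial arithmetic progression of length $N$. I claim this $R$ has the desired property.

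Given any $r$-colouring of $\Lambda$ and any $x\in\RR^d$, Theorem~\ref{T1} furnishes $s,t$ with $t\neq 0$ and $a_j:=s+jt\in\Lambda\cap B_R(x)$ for $0\le j\le N$ (relabelling the indices if necessary). Since $t\neq 0$, these points are pairwise distinct. I would then transport the colouring to the integers by colouring $j\in\{1,\dots,N\}$ with the colour of $a_j$, and apply Theorem~\ref{van der W} to extract integers $l_1<l_2<\dots<l_k$ in arithmetic progression, say with common difference $d\neq 0$, all receiving the same colour. The corresponding points $a_{l_1},\dots,a_{l_k}$, which equal $s+l_1 t,\dots,s+l_k t$, then form an arithmetic progression with common difference $d\,t\neq 0$, of length $k$, lying in $\Lambda\cap B_R(x)$ and all of one colour, which is precisely the conclusion sought.

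I do not expect a genuine obstacle, since Theorem~\ref{T1} already locates the uncoloured progression of length $N=W(r,k)$ and van der Waerden's theorem is exactly the tool that converts it into a monochromatic one of length $k$. The only points needing a moment's care are that $t\neq 0$ forces the $a_j$ to be distinct, so the induced colouring of $\{1,\dots,N\}$ genuinely involves $N$ points, and that the extracted subprogression is nontrivial because $d\,t\neq 0$; both are immediate.
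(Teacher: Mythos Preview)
Your proposal is correct and follows exactly the paper's own route: the paper applies Theorem~\ref{T1} with length $N=W(r,k)$ and then states that ``the rest of the proof is identical to the one of Theorem~\ref{TV2}'', which is precisely the transport-and-extract argument you wrote out. Your extra remarks about the distinctness of the $a_j$ and the nontriviality of the resulting subprogression simply make explicit what the paper leaves implicit.
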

\begin{proof} Let $N$ be so that the van der Waerden's theorem (Theorem~\ref{van der W}) holds for $r,k$ applied to $\{ 1,2,...N\}$.

By Theorem~\ref{T1} there exists some $R'$ such that, for all $x \in \RR^d$ the set
$$\Lambda \cap B_{R'}(x)$$
contains a nontrivial arithmetic progression of length $N$.

The rest of the proof is identical to the one of Theorem~\ref{TV2}.
\end{proof}

\section{Pure point diffractive sets}\label{sect:pure}

Now we can prove the following results, which is a weak Szemerédi's type theorem.

\begin{theorem}\label{P6} Let $\Lambda \subset \RR^d$ be point set, let $A_n=[-n,n]^d$ and $\cB$ be a subsequence of $\cA:= \{ A_n \}_n$. Assume that
\begin{itemize}
    \item[(i)] $\Lambda - \Lambda$ is uniformly discrete.
    \item[(ii)] $\overline{\dens_{\cB}}(\Lambda)>0$.
    \item[(iii)] $\Lambda$ is pure point diffractive with respect to $\cB$.
\end{itemize}

Then, for each $\epsilon >0$ and each $n \in \NN$, there exists a relatively dense set $T \subset \RR^d$ such that, for each $t \in T$ we have
\begin{displaymath}
\ud(\Lambda \cap (t+\Lambda) \cap ... \cap (nt+\Lambda)) \geq (1-\epsilon) \ud(\Lambda) \,.
\end{displaymath}
In particular, $\Lambda$ has arithmetic progressions of arbitrary length.
\end{theorem}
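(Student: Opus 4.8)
The plan is to use the characterization of pure point diffraction in Proposition~\ref{P4} to produce a relatively dense reservoir of ``statistical almost periods'' $t$, and then to argue that each such $t$ forces the intersection $\Lambda \cap (t+\Lambda) \cap \cdots \cap (nt+\Lambda)$ to retain almost all of the density of $\Lambda$. The key point is that a single almost period $t$ automatically controls all of its multiples $t,2t,\dots,nt$ at once, via the triangle inequality and translation invariance of $d_\cB$.

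First I would fix $\epsilon>0$ and $n\in\NN$ and set
\[
\delta:=\frac{2\epsilon\,\ud(\Lambda)}{n(n+1)}\,,
\]
which is strictly positive by assumption (ii). Since $\Lambda-\Lambda$ is uniformly discrete, $\Lambda$ is uniformly discrete, and hence so is every translate $kt+\Lambda$, all with a common constant $r$; thus they all lie in $\mathcal{UD}_r(\RR^d)$, where the semi-metric $d_\cB$ of Lemma~\ref{L5} is defined. Assumption (iii) together with Proposition~\ref{P4} then guarantees that
\[
T:=P_\delta=\{\,t\in\RR^d : d_\cB(\Lambda,t+\Lambda)<\delta\,\}
\]
is relatively dense; this $T$ will be the set claimed in the theorem.

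Next I would fix $t\in T$ and bound the density of the points of $\Lambda$ lost under intersection. The elementary inclusion
\[
\Lambda\setminus\bigcap_{k=1}^{n}(kt+\Lambda)=\bigcup_{k=1}^{n}\bigl(\Lambda\setminus(kt+\Lambda)\bigr)\subseteq\bigcup_{k=1}^{n}\bigl(\Lambda\,\Delta\,(kt+\Lambda)\bigr)
\]
and subadditivity of upper density give $\ud\bigl(\Lambda\setminus\bigcap_{k=1}^{n}(kt+\Lambda)\bigr)\le\sum_{k=1}^{n}d_\cB(\Lambda,kt+\Lambda)$. Translation invariance of $d_\cB$ and the triangle inequality yield
\[
d_\cB(\Lambda,kt+\Lambda)\le\sum_{j=0}^{k-1}d_\cB(jt+\Lambda,(j+1)t+\Lambda)=k\,d_\cB(\Lambda,t+\Lambda)<k\delta\,,
\]
so the sum is at most $\tfrac{n(n+1)}{2}\delta=\epsilon\,\ud(\Lambda)$. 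Writing $\Lambda$ as the disjoint union of $\Lambda\cap\bigcap_{k=1}^{n}(kt+\Lambda)$ and its complement within $\Lambda$, and using $\limsup(a_m+b_m)\le\limsup a_m+\limsup b_m$ for the counting ratios along $\cB$, I obtain
\[
\ud\bigl(\Lambda\cap(t+\Lambda)\cap\cdots\cap(nt+\Lambda)\bigr)\ge\ud(\Lambda)-\epsilon\,\ud(\Lambda)=(1-\epsilon)\,\ud(\Lambda)\,,
\]
which is the desired estimate.

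Finally, for the ``in particular'' claim I would fix any $n$, take $\epsilon=\tfrac12$, and pick $t\in T\setminus\{0\}$, which exists because $T$ is relatively dense and hence unbounded. The above gives $\ud\bigl(\Lambda\cap(t+\Lambda)\cap\cdots\cap(nt+\Lambda)\bigr)>0$, so this intersection is nonempty; any point $s$ in it satisfies $s-kt\in\Lambda$ for all $0\le k\le n$, producing the arithmetic progression $s,s-t,\dots,s-nt$ of length $n+1$ with nonzero common difference $-t$. The only delicate points I anticipate are bookkeeping ones: verifying that all translates share a common uniform discreteness constant so that Lemma~\ref{L5} and Proposition~\ref{P4} genuinely apply, and handling the passage from the union/complement bound to the lower bound on the intersection density through the $\limsup$ inequality. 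The conceptual heart is simply that pure point diffractivity supplies enough almost periods, and the triangle inequality lets one almost period serve simultaneously for all multiples.
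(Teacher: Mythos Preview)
Your proof is correct and follows essentially the same route as the paper: invoke Proposition~\ref{P4} to obtain a relatively dense set of almost periods, bound the density of $\Lambda\setminus\bigcap_k(kt+\Lambda)$ via symmetric differences and the translation invariance of $d_{\cB}$ from Lemma~\ref{L5}, and conclude via the $\limsup$ inequality. The only difference is bookkeeping: the paper telescopes directly as $\Lambda\setminus\Gamma_t\subseteq\bigcup_{k=0}^{n-1}\bigl((kt+\Lambda)\,\Delta\,((k+1)t+\Lambda)\bigr)$, getting $n$ terms each equal to $d_{\cB}(\Lambda,t+\Lambda)$ and hence the choice $\delta=\epsilon\,\ud(\Lambda)/n$, whereas you first compare each $kt+\Lambda$ to $\Lambda$ and then telescope, incurring the factor $\tfrac{n(n+1)}{2}$; this costs nothing conceptually and only weakens the constant.
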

\begin{proof}

Let $\epsilon >0$ and $n \in \NN$ be fixed. Then, by Proposition~\ref{P4} the set
\[
T=\{ t \in \RR^d : d(\Lambda, t+\Lambda) < \frac{\epsilon \ud(\Lambda)}{n} \}
\]
is relatively dense.

Let $t \in T$ be arbitrary, and let $\Gamma_t:=\Lambda \cap (t+\Lambda) \cap ... \cap (nt+\Lambda)$.

A simple computation shows that
\begin{equation}\label{eq232}
\begin{split}
\Lambda \backslash \Gamma_t &\subseteq \left( \Lambda \Delta (t+\Lambda) \right) \cup \left( (t+\Lambda) \Delta (2t+\Lambda) \right) \cup \\
&... \cup \left( ((n-1)t+\Lambda) \Delta (nt+\Lambda) \right) \,.
\end{split}
\end{equation}

Indeed, if $s \in \Lambda \backslash \Gamma_t$, then $s \notin \Lambda \cap (t+\Lambda) \cap ... \cap (nt+\Lambda)$. Let $k$ be the smallest $0 \leq k \leq n$
such that $s \notin kt+\Lambda$.

Since $s \in \Lambda$ we have $k \geq 1$ and $s \in (k-1)t+\Lambda$. This shows that
\begin{align*}
s \in ((k-1)t+\Lambda) \Delta (kt+\Lambda) &\subseteq \left( \Lambda \Delta (t+\Lambda) \right) \cup \left( (t+\Lambda) \Delta (2t+\Lambda) \right) \\
&\cup \dots \cup \left( ((n-1)t+\Lambda) \Delta (nt+\Lambda) \right)
\end{align*}

Now, since $\Gamma_t \subset \Lambda$ we have
\begin{align*}
\ud(\Lambda \backslash \Gamma_t) &= \ud (\Lambda \Delta \Gamma_t) = d(\Lambda, \Gamma_t) \stackrel{\eqref{eq232}}{\leq} \sum_{k=0}^{n-1} d( kt+\Lambda, (k+1)t+\Lambda) \\
&\stackrel{\mbox{Lemma}~\ref{L5}}{=\joinrel=\joinrel=\joinrel=\joinrel=\joinrel=\joinrel=\joinrel=\joinrel=\joinrel=\joinrel=\joinrel=\joinrel=} \sum_{k=0}^{n-1} d( \Lambda, t+\Lambda) < n \frac{\epsilon \ud(\Lambda)}{n} \,.\\
\end{align*}

Therefore, as $\Lambda \subset \Gamma_t \cup (\Lambda \backslash \Gamma_t)$ we have
\begin{displaymath}
\ud(\Lambda) \leq \ud(\Gamma_t) + \ud(\Lambda \backslash \Gamma_t) \leq \ud(\Gamma_t) +  \epsilon \ud(\Lambda) \,,
\end{displaymath}
which gives
\begin{displaymath}
\ud(\Lambda \cap (t+\Lambda) \cap ... \cap (nt+\Lambda)) \geq (1-\epsilon) \ud(\Lambda) \,.
\end{displaymath}

Finally, for a fixed $0 < \epsilon <1$, $T$ is relatively dense and hence infinite. Then, we can pick some $0 \neq t \in T$ and we get that
\begin{displaymath}
\ud(\Lambda \cap (t+\Lambda) \cap ... \cap (nt+\Lambda)) >0 \,,
\end{displaymath}
which gives that $\Lambda \cap (t+\Lambda) \cap ... \cap (nt+\Lambda) \neq \emptyset$. Picking some $s \in \Lambda \cap (t+\Lambda) \cap ... \cap (nt+\Lambda) $ we then get
\begin{displaymath}
s, s-t, s-2t, \dots , s-nt \in \Lambda \,,
\end{displaymath}
which proves the claim.

\end{proof}

\begin{remark} Under the assumptions of Theorem~\ref{P6}, as the autocorrelation $\gamma$ of $\Lambda$ exists with respect to $\cB$, the density of $\Lambda$ exists with respect to $\cB$. Moreover, the density is non-zero exactly when the diffraction is non-trivial.
\end{remark}

\bigskip

Given a CPS $(\RR^d ,H, \cL)$ and a compact window $W$, recall that $\oplam(W)$ is called a \textbf{weak model set of maximal density} with respect to a subsequence $\cB$ of $\cA=\{ [-n,n]^s\}_n$, if
\begin{displaymath}
\lim_n \frac{1}{\vol(B_n)} \mbox{card} \bigl(\oplam(W) \cap B_b \bigr) = \mbox{dens}(\cL) \theta_H(W) \,.
\end{displaymath}
For an overview of maximal density model sets and their properties see \cite{BHS,KR}.

\smallskip

As an immediate consequence of Theorem~\ref{P6} we get the following result.

\begin{corollary} Let $(\RR^d ,H, \cL)$ be a CPS and let $W \subset H$ be compact and let $\cB$ be subsequence of $A_n=[-n,n]^d$. If $\oplam(W)$ has maximal density with respect to $\cB$, and if $\theta_H(W) \neq 0$ then $\oplam(W)$ contains arithmetic progressions of arbitrary length.
\end{corollary}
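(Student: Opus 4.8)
The plan is to deduce the corollary directly from Theorem~\ref{P6} by verifying its three hypotheses for the point set $\Lambda := \oplam(W)$ with respect to the given averaging subsequence $\cB$. Since all the work of Theorem~\ref{P6} is already done, the only task is to translate ``compact window, maximal density, $\theta_H(W)\neq 0$'' into conditions (i)--(iii).

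First I would check condition (i), that $\Lambda - \Lambda$ is uniformly discrete. Because $W$ is compact, we have $\oplam(W) - \oplam(W) \subseteq \oplam(W - W)$, and $W - W$ is again compact. For any compact $K \subset H$ the set $\oplam(K)$ is uniformly discrete: if $x,y \in \oplam(K)$ are distinct with $d(x,y)$ small, then $(x-y,(x-y)^\star) \in \cL \cap \bigl(B_\delta(0) \times (K-K)\bigr)$, a finite set (lattice intersected with a relatively compact set) which reduces to $\{0\}$ once $\delta$ is small enough, contradicting injectivity of $\pi^{\RR^d}|_\cL$ unless $x=y$. Applying this with $K = W - W$ gives that $\Lambda - \Lambda$ is uniformly discrete.

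Condition (ii) is then immediate from the maximal density hypothesis: the defining limit equals $\dens(\cL)\,\theta_H(W)$, which is strictly positive because the lattice density $\dens(\cL)$ is positive and $\theta_H(W) \neq 0$ by assumption. Since the limit exists along $\cB$, in particular $\overline{\dens_{\cB}}(\Lambda) > 0$, which is condition (ii).

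The hard part will be condition (iii), that $\Lambda$ is pure point diffractive with respect to $\cB$; this is where the maximal density assumption does the essential work and where I would not reprove anything from scratch. I would invoke the known result (see \cite{BHS,KR}) that weak model sets of maximal density are pure point diffractive, with the autocorrelation existing along the prescribed averaging sequence $\cB$. Granting this, conditions (i)--(iii) of Theorem~\ref{P6} all hold, so that theorem produces, for every $n$, a nonempty set $\Lambda \cap (t+\Lambda)\cap\dots\cap(nt+\Lambda)$ and hence an arithmetic progression of length $n+1$ in $\Lambda$; as $n$ is arbitrary, $\oplam(W)$ contains arithmetic progressions of arbitrary length, completing the proof.
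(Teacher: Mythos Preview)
Your proposal is correct and follows essentially the same approach as the paper: verify hypotheses (i)--(iii) of Theorem~\ref{P6} using compactness of $W-W$ for uniform discreteness, the maximal density assumption together with $\theta_H(W)\neq 0$ for positive density, and the results of \cite{BHS,KR} for pure point diffraction, then invoke Theorem~\ref{P6}. The only difference is that you spell out in more detail why $\oplam(K)$ is uniformly discrete for compact $K$, which the paper simply asserts.
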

\begin{proof}
Since $\oplam(W)$ has maximal density with respect to $\cB$ and  $\theta_H(W) \neq 0$, $\oplam(W)$ has positive density. Moreover $\oplam(W) - \oplam(W) \subset \oplam(W-W)$ is uniformly discrete as $W-W$ is compact.

Finally $\oplam(W)$ is pure point diffractive with respect to $\cB$ by \cite{BHS,KR}.

The claim follows now from Theorem~\ref{P6}.
\end{proof}

By combining this with \cite{RVM} we get:

\begin{corollary} Let $(\RR^d ,H, \cL)$ be a CPS, let $W \subset H$ be compact and let $A_n=[-n,n]^d$. If $\theta_H(W) \neq 0$ then $\oplam(t+W)$ contains arithmetic progressions of arbitrary length for almost all $(s,t) \in \TT:= (\RR^d \times H)/\cL$.
\end{corollary}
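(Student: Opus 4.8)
The plan is to obtain this statement by feeding the translated windows into the preceding corollary, using \cite{RVM} to guarantee that the required maximal-density hypothesis holds for almost every translate. The point is that every hypothesis of Theorem~\ref{P6} other than maximal density is automatic for each translate $\oplam(t+W)$, and maximal density on a full-measure set is precisely what \cite{RVM} supplies.

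First I would record the two automatic hypotheses for $\Lambda_t := \oplam(t+W)$, valid for \emph{every} $t \in H$. For uniform discreteness of the difference set, note that
\[
\Lambda_t - \Lambda_t \subseteq \oplam\bigl((t+W)-(t+W)\bigr) = \oplam(W-W),
\]
and since $W-W$ is compact, $\oplam(W-W)$ is a weak model set and hence uniformly discrete; thus $\Lambda_t - \Lambda_t$ is uniformly discrete. For the density condition, translation invariance of the Haar measure $\theta_H$ gives $\theta_H(t+W) = \theta_H(W) \neq 0$, so whenever $\Lambda_t$ attains maximal density its density equals $\dens(\cL)\,\theta_H(W) > 0$.

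Next I would invoke \cite{RVM}: for almost all $(s,t) \in \TT$, the weak model set $\oplam(t+W)$ has maximal density with respect to $\cA$, hence with respect to the subsequence $\cB$ along which its autocorrelation exists. Fixing such an $(s,t)$, the set $\Lambda_t$ satisfies condition (ii) of Theorem~\ref{P6}; it satisfies (i) by the previous paragraph; and it satisfies (iii) because weak model sets of maximal density are pure point diffractive by \cite{BHS,KR}. Hence all three hypotheses of Theorem~\ref{P6} hold for $\Lambda_t$, and $\oplam(t+W)$ contains arithmetic progressions of arbitrary length. Since this holds for a full-measure set of $(s,t)\in\TT$, and since a physical translation does not affect the existence of arithmetic progressions, the claim follows.

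The substance of the argument is carried entirely by Theorem~\ref{P6} together with \cite{RVM}; the only genuine care needed is to ensure that the maximal-density statement extracted from \cite{RVM} holds on a set of full Haar measure in $\TT$ and is stated with respect to a subsequence to which Theorem~\ref{P6} applies. Accordingly, I expect the main (and only minor) obstacle to be the bookkeeping of the subsequence $\cB$: one must check that the same $\cB$ realizing maximal density also realizes the autocorrelation and the density used in Theorem~\ref{P6}, which is exactly the content of the maximal-density results in \cite{BHS,KR}.
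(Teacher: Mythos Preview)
Your proposal is correct and follows exactly the approach the paper takes: the paper simply states ``By combining this with \cite{RVM} we get'' and provides no further proof, so your argument is a faithful (and more detailed) unpacking of that one line, routing through the preceding corollary and checking the hypotheses of Theorem~\ref{P6} for the translated windows.
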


\smallskip

We complete the section by introducing a class of pure point diffractive Delone sets which don't contain arithmetic progressions of length 3.

\begin{proposition} Let $\{ a_n \}_{n \in \ZZ} \in \mathbb R$ be such that
\begin{itemize}
    \item[a)]$\lim_{n \to \pm \infty} a_n = 0$.
    \item[b)] $\ldots ,a_{-n} ,\ldots ,a_{-2}, a_{-1}, 1, a_0, a_1, a_2, \ldots , a_n, \ldots$ are linearly independent over $\mathbb Q$.
\end{itemize}
Define
\begin{displaymath}
\Lambda := \{ n+ a_n : n \in \ZZ \}
\end{displaymath}
Then, $\Lambda$ is pure point diffractive and does not contain three elements in arithmetic progression.
\end{proposition}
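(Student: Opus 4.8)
The proposition asserts two things about $\Lambda = \{n + a_n : n \in \ZZ\}$ where $a_n \to 0$ as $n \to \pm\infty$ and the collection $\{1\} \cup \{a_n : n \in \ZZ\}$ is linearly independent over $\QQ$: first, that $\Lambda$ is pure point diffractive, and second, that $\Lambda$ has no 3-term arithmetic progression.

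**The no-AP part.** Let me think about this first. Suppose $x + a_x, y + a_y, z + a_z$ form a 3-term AP with common difference $r \neq 0$. Then $(y + a_y) - (x + a_x) = (z + a_z) - (y + a_y)$, i.e. $(y - x) + (a_y - a_x) = (z - y) + (a_z - a_y)$. The integer parts give $y - x = z - y =: m$ (since the $a_n$ are small for large indices... wait, not necessarily small for small indices). Hmm, actually I can't immediately separate integer and fractional parts. Let me use linear independence directly. The AP condition is $(x + a_x) + (z + a_z) = 2(y + a_y)$, i.e. $(x + z - 2y) + (a_x + a_z - 2a_y) = 0$. This is a $\QQ$-linear relation among $1, a_x, a_y, a_z$ (with $1$ having coefficient $x + z - 2y \in \ZZ$). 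By linear independence, ALL coefficients vanish: $x + z - 2y = 0$ and $a_x + a_z - 2a_y = 0$. But wait — if $x, y, z$ are not all distinct we get degenerate relations. If $x = y = z$ then the three points coincide, not an AP. If exactly two of $x,y,z$ are equal, the independence argument needs care since then we have fewer than three distinct $a$'s. The key case: $x, y, z$ distinct. Then $a_x, a_y, a_z$ are three distinct terms, and the relation $a_x - 2a_y + a_z = 0$ contradicts linear independence (coefficients $1, -2, 1$ are not all zero). So no genuine AP exists.

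**The pure point diffraction part.** This is the harder part. The natural approach: $\Lambda$ is a small perturbation of $\ZZ$ (a "deformed lattice"), since $a_n \to 0$. The idea is that deformations of a lattice that decay at infinity should preserve pure point diffraction — intuitively, the perturbation is "thin" in a density sense. I would try to verify the criterion in Proposition~\ref{P4}: show that for each $\epsilon > 0$, the set $P_\epsilon = \{t : d_\cB(\Lambda, t + \Lambda) < \epsilon\}$ is relatively dense. Since $a_n \to 0$, for integer translations $t = m \in \ZZ$, the translate $m + \Lambda = \{m + n + a_n\}$ should be compared to $\Lambda = \{k + a_k\}$; the point $m + n + a_n$ sits near $(m+n) + a_{m+n}$, and the discrepancy $|a_n - a_{m+n}| \to 0$ as $n \to \infty$. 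So the symmetric difference has density zero, giving $d_\cB(\Lambda, m + \Lambda) = 0$ for every $m \in \ZZ$. Hence $\ZZ \subseteq P_\epsilon$ for every $\epsilon$, which is relatively dense, and Proposition~\ref{P4} yields pure point diffraction — provided the autocorrelation exists along a suitable $\cB$, which it does for FLC sets (and $\Lambda - \Lambda$ should be uniformly discrete / FLC since $a_n \to 0$ forces the points to stay near distinct integers for large $|n|$).

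**Main obstacle and caveats.** The delicate point is that "$a_n \to 0$" controls behavior at infinity but says nothing about finitely many central terms, which could cause $\Lambda - \Lambda$ to fail uniform discreteness if two points collide or come arbitrarily close. However, linear independence over $\QQ$ rules out exact collisions ($n + a_n = m + a_m$ with $n \neq m$ would give a nontrivial relation), and only finitely many pairs can be closer than any fixed threshold (since for large $|n|$ the points are near integers); so $\Lambda - \Lambda$ is uniformly discrete, though one must confirm it is also locally finite (FLC) to invoke the autocorrelation machinery. The hard part will be rigorously establishing that the density of the symmetric difference $\Lambda \,\Delta\, (m + \Lambda)$ vanishes: one must count, in $[-N, N]$, how many points fail to match, and show this is $o(N)$ using $a_n \to 0$ together with the fact that matching $n + a_n$ to $(n+m) + a_{n+m}$ works once $|a_n|, |a_{n+m}|$ are small enough. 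Once $d_\cB(\Lambda, m + \Lambda) = 0$ for all $m \in \ZZ$ is in hand, pure point diffraction follows immediately from Proposition~\ref{P4}, and the no-AP conclusion follows from the linear-independence argument above.
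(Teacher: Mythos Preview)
Your argument for the absence of $3$-term arithmetic progressions is correct and coincides with the paper's proof.

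The pure point diffraction argument, however, has a genuine gap; two of your claims are false.

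\textbf{(1)} $\Lambda-\Lambda$ is \emph{not} uniformly discrete (and $\Lambda$ does not have FLC). A typical difference is $(n+a_n)-(m+a_m)=(n-m)+(a_n-a_m)$; fixing $k:=n-m\neq 0$ and letting $n\to\infty$ produces infinitely many distinct elements $k+(a_n-a_{n-k})$ accumulating at the integer $k$, since $a_n-a_{n-k}\to 0$. Thus the hypothesis of Proposition~\ref{P4} fails, and you are not entitled to invoke it.

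\textbf{(2)} Even setting that aside, $d_\cB(\Lambda,\,m+\Lambda)\neq 0$ for $m\neq 0$. The semi-metric $d_\cB$ in this paper is the upper density of the \emph{exact} symmetric difference, not an ``approximate matching'' distance. By the same linear-independence computation you use elsewhere, $n+a_n=(m+k)+a_k$ forces $m=0$; hence for $m\neq 0$ the sets $\Lambda$ and $m+\Lambda$ are \emph{disjoint}, and $d_\cB(\Lambda,\,m+\Lambda)$ equals twice the density of $\Lambda$ (about $2$), not $0$. Your observation that ``$m+n+a_n$ sits near $(m+n)+a_{m+n}$'' establishes closeness, not coincidence, and this metric gives no credit for closeness.

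The paper sidesteps both issues by working at the level of measures rather than point sets: since $a_n\to 0$, the signed measure $\delta_\Lambda-\delta_\ZZ$ vanishes at infinity, hence is null weakly almost periodic by \cite{SpSt}; it follows from \cite{LS} that $\Lambda$ has the same diffraction as $\ZZ$, namely $\widehat{\gamma}=\delta_\ZZ$, which is pure point. No FLC, Meyer, or $P_\epsilon$ machinery is required.
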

\begin{proof}

First, since $a_n$ are linearly independent, we have $n+a_n \neq m+a_m$ for all $n \neq m$.
Indeed, if we assume by contradiction that there exists some $n\neq m$ such that $n+a_n=m+a_m$ then
\begin{displaymath}
(m-n)\cdot 1 +1 \cdot a_m +(-1)\cdot a_n =0 \,,
\end{displaymath}
contradicting the linear independence.

It follows that $\delta_\Lambda - \delta_{\ZZ}$ is a measure vanishing at infinity, and hence null weakly almost periodic \cite{SpSt}.

Therefore $\Lambda$ is pure point diffractive \cite{LS}. More generally, $\Lambda$ has the same diffraction as $\ZZ$ \cite{LS}, hence
\begin{displaymath}
\widehat{\gamma}=\delta_\ZZ \,.
\end{displaymath}

The claim of not containing three points in arithmetic progression is an immediate consequence of the linear independence. Indeed, if $(n+a_n), (m+a_m), (k+a_k)$ are three distinct elements in arithmetic progression then
\begin{align*}
2(m+a_m)&=(n+a_n) +(k+a_k) \\
0&=(n+k-2m)\cdot 1+ 1 \cdot a_n+1 \cdot a_k -2 \cdot a_m \\
\end{align*}
Since $m,n,k$ are distinct, this contradicts the linear independence of $1, a_m, a_n, a_k$.
\end{proof}

\begin{example} An explicit such example is
\begin{displaymath}
\Lambda= \{ n + \frac{e^{2n+2}}{3^{2n+2}} : n \in \NN \} \cup \{ -n + \frac{e^{2n+1}}{3^{2n+1}} : n \in \NN \} \cup \{0 \} \,.
\end{displaymath}
\end{example}

\smallskip

Let us mention here that we suspect that the full Szemer\'edi Theorem is true in model sets, but the proof is probably very long and tedious, and beyond the scope of this paper. We state this here as a conjecture:

\begin{conj}[Szemer\'edi Conjecture for model sets] Let $\oplam(W)$ be a model set in a CPS $(G, H, \cL)$ with non-compact and torsion free $G$, and let $\cA$ be a van Hove sequence in $G$.

If $\Lambda \subset \oplam(W)$ is a subset such that
\begin{displaymath}
\ud_{\cA} (\Lambda) >0 \,,
\end{displaymath}
then $\Lambda$ contains arithmetic progressions of arbitrary length.
\end{conj}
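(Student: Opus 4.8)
The plan is to transport the problem to the ambient lattice and then confront it with transference methods. Since $\pi^{\RR^d}|_{\cL}$ is a bijection onto $L$, the group $L$ is free abelian of finite rank, say $L \cong \ZZ^N$, and under this identification a length-$k$ arithmetic progression $s, s+t, \dots, s+(k-1)t$ in $\Lambda$ with $t \neq 0$ corresponds exactly to an honest arithmetic progression in $\ZZ^N$ with nonzero common difference, the injectivity of $\pi^{\RR^d}|_{\cL}$ guaranteeing that the increment stays nonzero. Thus it suffices to produce arbitrarily long progressions in the image $S \subset \ZZ^N$ of $\Lambda$, which is a subset of the image $P \subset \ZZ^N$ of $\oplam(W)$. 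The set $P$ is precisely the cut-and-project set $\{ n \in \ZZ^N : \phi(n) \in W \}$, where $\phi \colon \ZZ^N \to H$ has dense image; this is the ``model subset of $\ZZ^N$'' that will play the role of a sparse host.

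The first thing I would record is exactly why the elementary routes fail, since this dictates the strategy. Because $W$ is relatively compact, $P$ is confined to a bounded slab in the internal directions, so $P$ (and hence $S$) has upper Banach density zero in $\ZZ^N$; equivalently, the physical van Hove sets $A_n$ correspond to tubes $A_n \times W$ that grow in only $d$ of the $N$ lattice directions and are therefore not a F{\o}lner sequence for $\ZZ^N$. Consequently the positive physical density $\ud_\cA(\Lambda) > 0$ is a density along a non-F{\o}lner family, and neither the multidimensional Szemer\'edi theorem nor the Furstenberg--Katznelson multiple recurrence theorem applies to $S$ directly. This is also the precise point at which the argument of Theorem~\ref{P6} breaks down: there, pure point diffraction supplied an almost-periodicity of $\Lambda$ that produced the recurrence cheaply, whereas an arbitrary positive-density subset carries no such structure and genuinely requires Szemer\'edi-strength input.

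The route I would then pursue is a relative (transference) Szemer\'edi theorem with $P$ as host. Let $\nu$ be the normalized counting measure of $P$ along the tubes; then $S$ has positive density relative to $\nu$, and the goal becomes a lower bound for the relative count of $k$-progressions in $S$. I would try to run the Green--Tao--Ziegler / Conlon--Fox--Zhao machinery: first, compute via equidistribution of $\cL$ in the torus $\TT = (\RR^d \times H)/\cL$ the number of $k$-progressions in $P$ itself, whose main term is a positive integral of products of window indicators over the internal group, positivity here being essentially the elementary existence of progressions already established for $\oplam(W)$; second, establish a dense-model/densification step replacing $\mathbf{1}_S$ by a bounded model function on a genuinely F{\o}lner-amenable space; and third, prove a generalized von Neumann counting lemma bounding the deviation of the progression count of $S$ from the structured main term by a Gowers-type uniformity norm adapted to the cut-and-project geometry.

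The hard part, and the reason the statement is only a conjecture, is that model sets are emphatically \emph{not} pseudorandom: the internal correlations $\int_H \theta_H\bigl(W \cap (W - y) \cap \dots \cap (W - (k-1)y)\bigr)\, \dd y$ do not factor as powers of $\theta_H(W)$, so the linear forms condition required by the off-the-shelf relative Szemer\'edi theorems fails outright. I therefore expect that one must develop a version of the relative theorem tailored to \emph{structured} hosts, in which these non-factorizing window correlations are absorbed into the main term and shown to remain positive, while the error terms are controlled by equidistribution rates for $\cL$; regularity of the window (for instance $\theta_H(\partial W) = 0$) will be needed to handle boundary effects in the counting lemma. Carrying out this structured counting argument, together with the attendant equidistribution estimates, is where the proof becomes long and technical, and is the main obstacle to a complete argument.
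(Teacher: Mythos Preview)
The statement you are attempting is labeled in the paper as a \emph{Conjecture}, not a theorem, and the paper gives no proof of it. The authors write, immediately before stating it, that they ``suspect that the full Szemer\'edi Theorem is true in model sets, but the proof is probably very long and tedious, and beyond the scope of this paper.'' There is therefore no paper proof to compare your proposal against; your write-up is a strategy for attacking an open problem, not an alternative to an existing argument.

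That said, a few comments on the proposal itself. Your diagnosis of why the problem is hard is accurate and matches the spirit of the paper's remark: the positive density of $\Lambda$ is only relative to the model set, the image in the ambient lattice has zero Banach density, and the pure point diffraction shortcut used in Theorem~\ref{P6} is unavailable for an arbitrary positive-density subset. Your proposed transference route via a relative Szemer\'edi theorem with host $P=\oplam(W)$ is a natural line of attack, and you correctly identify the main obstruction, namely that model sets fail the linear forms (pseudorandomness) condition because the window correlations do not factor. One technical caveat: the conjecture is stated for a general non-compact torsion-free LCAG $G$ and a general internal space $H$, whereas your reduction to a finite-rank free abelian group $L\cong\ZZ^N$ tacitly assumes $G=\RR^d$ and that the lattice $\cL$ is finitely generated; for general $H$ this need not hold, so even the ``transport to $\ZZ^N$'' step would require additional hypotheses or a different formulation. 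In any case, what you have written is an informed outline of obstacles and a plausible but admittedly incomplete plan, which is the honest status of the problem; it is not, and does not claim to be, a proof.
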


\subsection*{Acknowledgments}  The work was partially supported by NSERC with grant 03762-2014 (NS) and 2019-07097 (AT), and the authors are thankful for the support. AK was partially supported by Dr. Chris Ramsey via NSERC grant no 2019-05430 and she is grateful for the support.

\end{document}